\newtheorem{theorem}{Theorem}[section]
\newtheorem{lemma}[theorem]{Lemma}
\newtheorem{corollary}[theorem]{Corollary}
\theoremstyle{definition}
\newtheorem{definition}[theorem]{Definition}
\newtheorem{remark}[theorem]{Remark}
\numberwithin{equation}{section}
\title[Generalized Lebesgue points for Haj\l asz functions]
{
Generalized Lebesgue points for  Haj\l asz 
functions 
}
\author{Toni Heikkinen}
\DeclareMathOperator*{\essinf}{ess\,inf}
\DeclareMathOperator*{\esssup}{ess\,sup}
\newcommand\rn{\mathbb R^n}
\newcommand\re{\mathbb R}
\newcommand\rv{\overline{\mathbb R}}
\newcommand\n{\mathbb N}
\newcommand\ph{\varphi}
\newcommand\eps{\varepsilon}
\newcommand\M{\operatorname{\mathcal M}}
\newcommand\cA{\mathcal A}
\newcommand\cD{\mathcal D}
\newcommand\cP{\mathcal P}
\providecommand{\ch}[1]{\text{\raise 2pt \hbox{$\chi$}\kern-0.2pt}_{#1}}
\providecommand{\vint}[1]{\mathchoice
          {\mathop{\vrule width 5pt height 3 pt depth -2.5pt
                  \kern -9.5pt \kern 1pt\intop}\nolimits_{\kern -5pt{#1}}}%
          {\mathop{\vrule width 5pt height 3 pt depth -2.6pt
                  \kern -6pt \intop}\nolimits_{\kern -3pt{#1}}}%
          {\mathop{\vrule width 5pt height 3 pt depth -2.6pt
                  \kern -6pt \intop}\nolimits_{\kern -3pt{#1}}}%
          {\mathop{\vrule width 5pt height 3 pt depth -2.6pt
                  \kern -6pt \intop}\nolimits_{\kern -3pt{#1}}}}
\begin{document}

\begin{abstract}
Let $X$ be a quasi-Banach function space over a doubling metric measure space $\mathcal P$. Denote by $\alpha_X$ the generalized upper Boyd index of $X$. We show that if $\alpha_X<\infty$
and $X$ has absolutely continuous quasinorm, then quasievery point is a generalized Lebesgue point of a quasicontinuous Haj\l asz function $u\in\dot M^{s,X}$.
Moreover,
if $\alpha_X<(Q+s)/Q$, then quasievery point is a Lebesgue point of $u$. As an application we obtain Lebesgue type theorems for Lorentz--Haj\l asz, Orlicz--Haj\l asz and variable exponent Haj\l asz functions.
\end{abstract}

\keywords{Haj\l asz space, metric measure space, median, quasicontinuity, Lebesgue point, quasi-Banach function space}
\subjclass[2010]{46E35, 43A85} 

\date{\today}

\maketitle

\section{Introduction and main results}
Let $\cP=(\cP,d,\mu)$ be a doubling metric measure space. By the Lebesgue differentiation theorem, almost every point of a locally integrable function is a Lebesgue point. 
As expected, for smoother functions, the set of non-Lebesgue points is smaller. 
In \cite{KL}, Kinnunen and Latvala showed that for a quasicontinuous Haj\l asz--Sobolev function $u\in M^{1,p}(\cP)$, $p>1$, there exists a set $E$ of $M^{1,p}$-capacity zero such that
\begin{equation}\label{Leb point}
 \lim _{r\to0}\frac1{\mu(B(x,r))}\int_{B(x,r)}u(y)\,d\mu(y)=u(x)   
\end{equation}
for every $x\in\cP\setminus E$.
The case $p=1$ was studied in \cite{KiTu} and \cite{P}. Recently, in \cite{HeKoTu} and independently in \cite{BoKr}, it was shown that
if
\begin{equation}\label{Q}
\frac{\mu(B(y,r))}{\mu(B(x,R))}\ge c\Big(\frac rR\Big)^Q 
\end{equation}
for every $x\in\cP$, $0<r\le R$ and $y\in  B(x,R)$, then for every quasicontinuous $u\in\dot M^{s,p}$ with
$p>Q/(Q+s)$,  \eqref{Leb point} holds true outside a set of $M^{s,p}$-capacity zero.

If we replace integral averages in \eqref{Leb point} by medians, then the result holds true also for small $p>0$. For $0<\gamma<1$, $A\subset\cP$ and $u\in L^0$, denote 
\[
m_u^\gamma(A)=\inf\big\{a\in\re: \mu(\{x\in A: u(x)>a\})< \gamma\mu(A)\big\}.
\]
If $p>0$ and $u\in \dot M^{s,p}$ is quasicontinuous, then
by \cite[Theorem 1.2]{HeKoTu},
there exists a set $E\subset \cP$  of  $M^{s,p}$-capacity zero
such that 
\begin{equation}\label{gen leb point}
\lim_{r\to 0}m_u^{\gamma}(B(x,r))=u(x),
\end{equation}
for every $x\in \cP\setminus E$ and $0<\gamma\le 1/2$.

In this paper, we will study the existence of (generalized) Lebesgue points for functions $u$ whose Haj\l asz gradient belongs to a general quasi-Banach function space $X$.
This approach allows us to simultaneously cover, for example, Orlicz--Haj\l asz, Lorentz--Haj\l asz and variable exponent Haj\l asz functions.

For $0<\gamma<1$, $0<R\le\infty$,  $u\in L^0(\cP)$ and $x\in\cP$, denote
\[
\M^\gamma_R u(x)=\sup_{0<r<R}m^\gamma_{|u|}(B(x,r)).
\]
Operator $\M^\gamma=\M^\gamma_\infty$ and its variants have turned out to be useful in harmonic analysis and in the theory of function spaces, see for example \cite{FZ}, \cite{Fu}, \cite{GKZ}, \cite{HIT}, \cite{HeKi}, \cite{HeKoTu}, \cite{HeTu2}, \cite{Hy}, \cite{Ha}, \cite{JPW}, \cite{JT}, \cite{J}, \cite{Kar}, \cite{L},  \cite{LP}, \cite{LP2}, \cite{PT}, \cite{St}, \cite{StTo}, \cite{Zh}. 

\begin{theorem}\label{main thm}
Let $\mu$ be doubling. Suppose that $X$ has absolutely continuous quasinorm and that, for every $0<\gamma<1$ and for every ball $B\subset\cP$, there exists a constant $C$ such that
\begin{equation}\label{M gamma bd}
    \|(\M^\gamma_1 g)\chi_B\|_X\le C\|g\|_X
\end{equation}
for every $g\in X$. Let $0<s\le 1$.
Then, for every quasicontinuous $u\in\dot M^{s,X}(\cP)$,
there exists a set $E\subset \cP$  with  $C_{M^{s,X}}(E)=0$
such that 
\begin{equation}\label{claim}
   \lim_{r\to 0} m^\gamma_{|u-u(x)|}(B(x,r))=0\ \  \text{ and }\ \ 
   \lim_{r\to 0} m^\gamma_{u}(B(x,r))=u(x)
\end{equation}
for every $x\in \cP\setminus E$ and $0<\gamma< 1$.
\end{theorem}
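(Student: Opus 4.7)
The plan is to combine three ingredients: a median-oscillation estimate for Haj\l asz functions controlling $|m^\gamma_u(B(x,r))-m^\gamma_u(B(x,r'))|$ by $r^s$ times a local median maximal function of the gradient; a capacity-zero bound on the set where this maximal function fails to give the required decay; and quasicontinuity of $u$ to identify the resulting pointwise limit with $u(x)$.

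Let $g\in X$ be a Haj\l asz gradient of $u$. First I would establish by the usual telescoping argument over dyadic balls $B(x,2^{-i}r)$ the chaining estimates
\[
\bigl|m^\gamma_u(B(x,r'))-m^\gamma_u(B(x,r))\bigr|\le Cr^s\M^{\gamma'}_{\lambda r}g(x)
\]
and
\[
m^\gamma_{|u-c(x)|}(B(x,r))\le Cr^s\M^{\gamma'}_{\lambda r}g(x),
\]
where $c(x)=\lim_{i\to\infty}m^\gamma_u(B(x,2^{-i}r))$ is the Cauchy limit where it exists and $\gamma'<\gamma$ absorbs the sub-additivity defect of medians. The underlying single-scale median Poincar\'e inequality for Haj\l asz functions on doubling spaces is standard (cf.\ \cite{HeKoTu}).

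The second and main step is to show that
\[
F=\bigl\{x\in\cP:\limsup_{r\to 0} r^s\,\M^{\gamma'}_r g(x)>0\bigr\}
\]
has $M^{s,X}$-capacity zero. Localizing to a ball $B_0$ and splitting $g=g_1+g_2$ with $|g_1|\le\lambda$ everywhere and $g_2=g\chi_{\{|g|>\lambda\}}$, absolute continuity of the quasinorm gives $\|g_2\|_X<\delta$ once $\lambda=\lambda(\delta)$ is large. The bounded part contributes nothing, since $r^s\M^{\gamma'}_r g_1\le\lambda r^s\to 0$ everywhere, so $F\cap B_0$ is contained in super-level sets of $\M^{\gamma'}_1 g_2$. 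The hypothesis (\ref{M gamma bd}) bounds $\|(\M^{\gamma'}_1 g_2)\chi_{B_0}\|_X\le C\delta$, which via the Chebyshev-type inequality $\|\chi_{\{f>t\}}\|_X\le t^{-1}\|f\|_X$ in a quasi-Banach function space and the test-function characterization of $M^{s,X}$-capacity yields $C_{M^{s,X}}(F\cap B_0)\lesssim\delta$; sending $\delta\to 0$ gives the claim.

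Finally, quasicontinuity of $u$ supplies open sets $V_k$ with $C_{M^{s,X}}(V_k)<2^{-k}$ outside which $u$ is continuous. Taking $E=F\cup\bigcap_k V_k$, augmented by a $\mu$-null set on which the pointwise Haj\l asz inequality fails, for $x\notin E$ continuity of $u$ on the complement of $V_k$ (which occupies most of $B(x,r)$ for $k$ large) forces $c(x)=u(x)$, and the chaining estimates then deliver both limits in (\ref{claim}) at once. The principal obstacle is the capacity bound for $F$: classical proofs for integral averages lean on a weak-type bound for the Hardy--Littlewood maximal operator, whereas here only the qualitative boundedness (\ref{M gamma bd}) is at hand, and the quantitative control must be extracted from absolute continuity of the quasinorm applied to the tail $g\chi_{\{|g|>\lambda\}}$.
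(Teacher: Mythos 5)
Your architecture (median chaining, a capacity bound for a bad set defined through a maximal function of the gradient, identification of the limit via quasicontinuity) is plausible, and the chaining estimates in your first step are standard and correct. The proof breaks down, however, exactly at the point you yourself flag as the principal obstacle: the claim that $C_{M^{s,X}}(F\cap B_0)\lesssim\delta$ follows from $\|(\M^{\gamma'}_1 g_2)\chi_{B_0}\|_X\le C\delta$ via a Chebyshev inequality and ``the test-function characterization of $M^{s,X}$-capacity''. An admissible function for $C_{M^{s,X}}$ must lie in $M^{s,X}=\dot M^{s,X}\cap X$, i.e.\ it must itself possess an $s$-gradient in $X$; controlling only its $X$-quasinorm is not enough. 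Neither $\chi_{\{\M^{\gamma'}_1 g_2>t\}}$ nor $t^{-1}(\M^{\gamma'}_1 g_2)\chi_{B_0}$ is admissible: $g_2$ is a truncation of the gradient, hence an essentially arbitrary nonnegative $X$-function, and there is no reason for its median maximal function to have any Haj\l asz regularity. Chebyshev controls the size of the superlevel set in $X$ (or in measure), which is strictly weaker than a capacity bound. This is precisely why the paper runs the maximal operator over the function rather than over its gradient: Lemma \ref{gradient for discrete median maximal function} shows that $\M^{\gamma,*}_{R}v$ admits $C\M^{\gamma/C}_{3R}g$ as an $s$-gradient, so by \eqref{M gamma bd} the lower semicontinuous function $\lambda^{-1}\M^{\gamma,*}_{1/3}(v\ph)$ is a legitimate capacity test function with $M^{s,X}$-norm $\le C\lambda^{-1}\|v\|_{M^{s,X}}$ (Theorem \ref{cap weak type}); applying this with $v=u-v_k$, $v_k$ continuous, gives the smallness of capacity that your argument is missing (Lemma \ref{cap limsup implies gen leb points}).

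A secondary weakness is the final identification $c(x)=u(x)$: small capacity of $V_k$ does not guarantee that $V_k$ occupies a small fraction of $\mu(B(x,r))$ for every small $r$, so the median of $u$ over $B(x,r)$ need not be governed by the values of $u$ on $\cP\setminus V_k$. The standard remedy, and the paper's, is to approximate $u$ in $M^{s,X}$ by continuous functions (Lemma \ref{cont lemma}, which is where absolute continuity of the quasinorm enters), deduce from the capacitary weak-type estimate that the approximants' medians converge uniformly off sets of small capacity, and identify the limit using Lemmas \ref{Kil} and \ref{cap lemma}. Unless you can supply a genuinely different proof of the capacity bound for your set $F$, the argument as written does not close.
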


We say that a point $x\in\cP$ satisfying \eqref{claim} for every $0<\gamma<1$ is a \emph{generalized Lebesgue point} of $u$.

\medskip
The (restricted) Hardy--Littlewood maximal function of a locally integrable function $u$ is
\[
\M_R u(x)=\sup_{0<r<R}\frac1{\mu(B(x,r))}\int_{B(x,r)}|u(y)|\,d\mu(y).
\]
As usual, we denote $\M=\M_\infty$.

\begin{theorem}\label{main thm 2} Let $0<s\le 1$ and let $\mu$ satisfy \eqref{Q}. Denote $q=Q/(Q+s)$.
Suppose that $X$ has absolutely continuous quasinorm and that, for every ball $B\subset \cP$, there exists a constant $C$ such that
\begin{equation}\label{M q-bd}
\|(\M_1g^q)^{1/q}\chi_B\|_X\le C\|g\|_X
\end{equation}
for every $0\le g\in X$.
Then, for every quasicontinuous $u\in\dot M^{s,X}(\cP)$, quasievery point is a Lebesgue point of $u$.
\end{theorem}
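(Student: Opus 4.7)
The plan is to bootstrap Theorem~\ref{main thm}: that result already yields convergence of medians $m_u^{\gamma}(B(x,r))\to u(x)$ at quasi-every point, so it suffices to show that the integral average $u_{B(x,r)}$ agrees with $m_u^{1/2}(B(x,r))$ in the limit at quasi-every point. Observe first that \eqref{M q-bd} implies \eqref{M gamma bd}, since Chebyshev gives $m^\gamma_{|g|}(B(x,r))\le (1/\gamma)^{1/q}(\M_1 g^q(x))^{1/q}$, so Theorem~\ref{main thm} is available under the hypotheses here. The main vehicle for comparing averages to medians is the pointwise Haj\l asz--Poincar\'e inequality at the critical exponent $q=Q/(Q+s)<1$, coupled with a capacitary control of the exceptional set of the fractional-type maximal function $(\M_1 g^q)^{1/q}$.

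First I would establish the oscillation bound
\[
\vint{B(x,r)} |u(y)-u_{B(x,r)}|\,d\mu(y)\le C\,r^s\bigl(\M_{2r}(g^q)(x)\bigr)^{1/q}
\]
for any $s$-Haj\l asz gradient $g$ of $u$. This is the standard pointwise Haj\l asz--Poincar\'e inequality at the critical exponent: iterate the Haj\l asz oscillation bound over dyadic subballs of $B(x,r)$ and use the lower mass bound \eqref{Q} to convert $\ell^1$-averages of $g$ into $\ell^q$-averages of $g^q$; the choice $q=Q/(Q+s)$ is exactly what makes the resulting geometric series in $r^s$ summable.

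Next I would show that the set $F=\{x\in\cP:(\M_1 g^q)^{1/q}(x)=\infty\}$ has $M^{s,X}$-capacity zero. Hypothesis \eqref{M q-bd} gives $(\M_1 g^q)^{1/q}\chi_B\in X$ for every ball $B$, so $F$ has measure zero; to upgrade this to capacity zero I would exploit absolute continuity of the quasinorm by writing $g=g\chi_{\{g\le k\}}+g\chi_{\{g>k\}}$ with $\|g\chi_{\{g>k\}}\|_X\to 0$ as $k\to\infty$, controlling the $X$-quasinorm of $(\M_1(g\chi_{\{g>k\}})^q)^{1/q}\chi_B$ by \eqref{M q-bd}, and running a Borel--Cantelli-type argument along this sequence to conclude. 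Setting $E$ equal to the union of $F$ with the exceptional set from Theorem~\ref{main thm}, so $C_{M^{s,X}}(E)=0$, one then splits
\[
|u_{B(x,r)}-u(x)|\le \bigl|u_{B(x,r)}-m_u^{1/2}(B(x,r))\bigr|+\bigl|m_u^{1/2}(B(x,r))-u(x)\bigr|
\]
for each $x\in\cP\setminus E$; the second term vanishes as $r\to 0$ by Theorem~\ref{main thm}, while the first is bounded by $2\,\vint{B(x,r)}|u-u_{B(x,r)}|\,d\mu$ via the Chebyshev-type estimate $|u_B-m_u^{1/2}(B)|\le 2\vint{B}|u-u_B|\,d\mu$ (which follows from the defining property of the $1/2$-median), and this vanishes by the Poincar\'e bound above together with $x\notin F$.

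The main obstacle is the capacity estimate on $F$: hypothesis \eqref{M q-bd} yields only $X$-boundedness of $g\mapsto(\M_1 g^q)^{1/q}$, not any $M^{s,X}$-capacitary weak-type inequality, so translating an $X$-norm control into a capacity bound must go through the absolute continuity of the quasinorm and a careful approximation adapted to the Haj\l asz framework. Making the estimates uniform enough, across the truncation levels $k$ and over dyadic levels of the maximal function, to conclude $C_{M^{s,X}}(F)=0$ while still respecting the quasi-Banach structure of $X$ is the most delicate part of the proof.
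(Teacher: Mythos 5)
Your reduction is internally consistent up to the last step: the Chebyshev bound $m^\gamma_{|g|}(B(x,r))\le \gamma^{-1/q}(\M_1 g^q(x))^{1/q}$ does show that \eqref{M q-bd} implies \eqref{M gamma bd}, so Theorem \ref{main thm} is available, and the pointwise chain $|u_{B(x,r)}-m^{1/2}_u(B(x,r))|\le 2\vint{B(x,r)}|u-u_{B(x,r)}|\,d\mu\le Cr^s(\M_{2r}g^q(x))^{1/q}$ is correct. The genuine gap is exactly where you locate it, and your proposed fix does not close it. To prove $C_{M^{s,X}}(F)=0$ for $F=\{x:(\M_1 g^q)^{1/q}(x)=\infty\}$ you must produce, for each $\eps>0$, a function $w\in M^{s,X}$ with $w\ge 1$ on a neighbourhood of $F$ and $\|w\|_{M^{s,X}}=\|w\|_X+\inf_{h\in\cD^s(w)}\|h\|_X<\eps$; the second summand is the problem. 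Truncation plus \eqref{M q-bd} gives $\|(\M_1(g\chi_{\{g>k\}})^q)^{1/q}\chi_B\|_X\to 0$, i.e.\ smallness in the $X$-quasinorm only, and the natural admissible candidate $w=\lambda^{-1}(\M_1(g\chi_{\{g>k\}})^q)^{1/q}$ (lower semicontinuous, hence $\ge1$ on an open neighbourhood of $F$) has no reason to possess an $s$-gradient in $X$: from the point of view of the operator $h\mapsto(\M_1 h^q)^{1/q}$, the gradient $g$ is just an arbitrary nonnegative element of $X$, and this operator does not map $X$ into $M^{s,X}$. A Borel--Cantelli argument run in $X$ therefore controls measures of the bad sets, not their $M^{s,X}$-capacities, and the conclusion $C_{M^{s,X}}(F)=0$ remains unproved.

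Note how the paper sidesteps exactly this: it never estimates the capacity of a superlevel set of a maximal function of the gradient, but only of the discrete maximal function $\M^*_R(u\ph)$ of the localized function itself. Lemma \ref{gradient for discrete maximal function} shows that $C(\M_{6R}g^q)^{1/q}$ is an $s$-gradient of $\M^*_R u$, so $\M^*_R(u\ph)$ lies in $M^{s,X}$ with norm controlled through \eqref{M q-bd} and Lemma \ref{uusi lemma}, and its lower semicontinuity makes $\lambda^{-1}\M^*_R(u\ph)$ admissible for the capacity of $\{\M^*_R(u\ph)>\lambda\}$; this yields the capacitary weak-type estimate of Theorem \ref{cap weak type 2}, which is then fed into the approximation scheme of Lemma \ref{cap limsup implies leb points} with continuous $v_k\to u$ in $M^{s,X}$. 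If you wish to keep your median-based decomposition, you still need that machinery (applied to $u-v_k$) or some substitute for it; as written, the proposal is incomplete at the capacity step.
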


For any quasi-Banach function space $X$ over $\cP$, define $\Phi_X:(0,1)\to [1,\infty];$
\[
\Phi_X(\gamma)=\sup_{\|f\|_X\le 1}\|\M^\gamma f\|_X.
\]
The \emph{generalized upper Boyd index} of $X$ is
\begin{equation}
\alpha_X=\lim_{\gamma\to 0}\frac{\log \Phi_X(\gamma)}{\log(1/\gamma)}.
\end{equation}
 The generalized upper Boyd index was introduced in \cite{LP2}, where it was shown that the Hardy--Littlewood maximal operator is bounded on $X(\rn)$ if and only if
$\alpha_X<1$. 
As a corollary of Theorems \ref{main thm} ja \ref{main thm 2} we have the following result.


\begin{theorem}\label{main thm 3}
Suppose that $X$ has absolutely continuous quasinorm and that $\mu$ satisfies \eqref{Q}.
Let $0<s\le 1$ and let $u\in\dot M^{s,X}(\cP)$ be quasicontinuous.
\begin{itemize}
    \item[(1)] If $\alpha_X<\infty$, then quasievery point is a generalized Lebesgue point of $u$.
    \item[(2)]  If $\alpha_X<(Q+s)/Q$, then quasievery point is a Lebesgue point of $u$.
    \end{itemize}

\end{theorem}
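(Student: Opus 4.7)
The plan is to deduce both items by verifying the operator estimates \eqref{M gamma bd} and \eqref{M q-bd} required by Theorems \ref{main thm} and \ref{main thm 2} from the assumptions $\alpha_X<\infty$ and $\alpha_X<(Q+s)/Q$, respectively.

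For (1), I would first observe that $\gamma\mapsto m^\gamma_u(B)$ is nonincreasing in $\gamma$, so $\Phi_X$ is nonincreasing on $(0,1)$. The hypothesis $\alpha_X<\infty$ then yields, for any $\alpha\in(\alpha_X,\infty)$, some $\gamma_0\in(0,1)$ with $\Phi_X(\gamma)\le \gamma^{-\alpha}$ whenever $\gamma<\gamma_0$, and by monotonicity $\Phi_X(\gamma)<\infty$ for every $\gamma\in(0,1)$. Since $\M^\gamma_1\le \M^\gamma$ pointwise and $X$ is an ideal function space, for every ball $B$ and every $g\in X$
\[
\|(\M^\gamma_1 g)\chi_B\|_X\le \|\M^\gamma g\|_X\le \Phi_X(\gamma)\|g\|_X,
\]
which is \eqref{M gamma bd}. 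Theorem \ref{main thm} then gives (1).

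For (2), set $q=Q/(Q+s)\in(0,1)$, so $1/q=(Q+s)/Q$. The bridge from the averaging operator $\M_1$ back to medians is the layer-cake identity
\[
\vint_B u\,d\mu=\int_0^1 m^\gamma_u(B)\,d\gamma,\qquad u\ge 0,
\]
which holds because $\gamma\mapsto m^\gamma_u(B)$ is (a version of) the decreasing rearrangement of $u\chi_B$ against the probability measure $\mu(B)^{-1}\mu|_B$. Taking the supremum over $0<r<1$ and applying the identity to $u^q$, together with $m^\gamma_{u^q}(B)=(m^\gamma_u(B))^q$, gives the pointwise estimate
\[
\M_1 u^q(x)\le \int_0^1 (\M^\gamma_1 u(x))^q\,d\gamma,\qquad u\ge 0.
\]

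The last step is to take $X$-quasinorms of $(\M_1 u^q)^{1/q}\chi_B$. Because $\alpha_X<1/q$, the space $X$ will be $q$-convex (extracted from the definition of $\alpha_X$ by testing $\Phi_X$ on sums of disjointly supported bumps), so its $(1/q)$-convexification can be renormed to a Banach function space and admits Minkowski's integral inequality, yielding
\[
\Bigl\|\Bigl(\int_0^1 (\M^\gamma_1 u)^q\,d\gamma\Bigr)^{1/q}\chi_B\Bigr\|_X\le C\Bigl(\int_0^1 \|\M^\gamma u\|_X^q\,d\gamma\Bigr)^{1/q}.
\]
Choosing $\alpha_X<\alpha<1/q$ and using $\Phi_X(\gamma)\le \gamma^{-\alpha}$ for small $\gamma$, the integral $\int_0^1 \Phi_X(\gamma)^q\,d\gamma$ converges precisely because $\alpha q<1$; this verifies \eqref{M q-bd}, and Theorem \ref{main thm 2} finishes (2). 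The main obstacle is the $q$-convexity step: passing from the pointwise estimate to a quasinorm inequality is not automatic in a quasi-Banach function space and requires quantitatively exploiting the Boyd-index assumption $\alpha_X<1/q$.
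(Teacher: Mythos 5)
Your part (1) is correct and is essentially the paper's own argument: $\alpha_X<\infty$ forces $\Phi_X(\gamma)<\infty$ for small $\gamma$, hence (by the monotonicity of $\gamma\mapsto\M^\gamma u$) for every $\gamma\in(0,1)$, which gives \eqref{M gamma bd} and lets Theorem \ref{main thm} apply.

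In part (2) there is a genuine gap at the decisive step. The pointwise inequality $\M_1 u^q(x)\le\int_0^1(\M^\gamma u(x))^q\,d\gamma$ via the rearrangement identity is fine and is exactly the paper's starting point (proof of Lemma \ref{index}). But you then pass to quasinorms by asserting that $\alpha_X<1/q$ makes $X$ $q$-convex, so that Minkowski's integral inequality becomes available in the $(1/q)$-convexification. This is not established: the generalized upper Boyd index only records the growth of $\gamma\mapsto\|\M^\gamma\|_{X\to X}$ and carries no convexity information by itself, and ``testing $\Phi_X$ on disjointly supported bumps'' does not produce the estimate a $q$-convexity proof would require. You flag this yourself as the main obstacle, but you do not resolve it, so \eqref{M q-bd} is not actually verified. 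The paper circumvents the issue entirely, and more elementarily, in Lemma \ref{index}: discretize the $\gamma$-integral over dyadic scales, $\int_0^1(\M^\gamma u)^q\,d\gamma\le\sum_{i\ge1}2^{-i}(\M^{2^{-i}}u)^q$, apply a pointwise discrete H\"older inequality to get $(\M u^q)^{1/q}\le C\sum_{i\ge1}2^{-i\eps/q}\M^{2^{-i}}u$ with $\alpha<\eps/q<1/q$, and then take quasinorms using only the Aoki--Rolewicz inequality \eqref{Aoki} together with $\|\M^{2^{-i}}u\|_X\le C2^{i\alpha}\|u\|_X$; the resulting geometric series converges precisely because $\eps/q>\alpha$. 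No convexity of $X$ is needed, only the $\rho$-triangle inequality that every quasi-Banach function space possesses. Replacing your convexification step by this discretization closes the gap.
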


\section{Preliminaries}\label{sec: preliminaries}

\subsection{Basic assumptions}
In this paper,  $\cP=(\cP, d,\mu)$ is a metric measure space equipped with a metric $d$ and a Borel regular,
doubling outer measure $\mu$, for which the measure of every ball is positive and finite.
The doubling property means that there exists a constant $c_d>0$ such that
\[
\mu(B(x,2r))\le c_d\mu(B(x,r))
\]
for every ball $B(x,r)=\{y\in \cP:d(y,x)<r\}$, where $x\in \cP$ and $r>0$.

The doubling condition is equivalent to the existence of  constants $c$ and $Q$ such that \eqref{Q} holds
for every $0<r\le R$, $x\in\cP$ and $y\in B(x,R)$. 

{\em The integral average} of a locally integrable function $u$ over a set $A$ of positive and finite measure is
\[
u_A=\vint{A}u\,d\mu=\frac{1}{\mu(A)}\int_A u\,d\mu.
\]

By $\ch{E}$, we denote the characteristic function of a set $E\subset \cP$ and by $\rv$, the extended real numbers $[-\infty,\infty]$.
$L^0=L^0(\cP)$ is the set of all measurable, almost everywhere finite functions $u\colon \cP\to\rv$.
In general, $C$ and $c$ are positive constants whose values are not necessarily same at each occurrence.
When we want to stress that the constant depends on other constants or parameters $a,b,\dots$, we write $C=C(a,b,\dots)$.
\subsection{Quasi-Banach function spaces} A quasinorm on a subspace of $L^0(\cP)$ is a functional $\|\cdot\|$ such that
\begin{itemize}
\item $\|f\|=0 \iff f=0$ a.e.; 
\item $\|\alpha f\|=|\alpha|\|f\|$ for every $\alpha\in\re$;
\item There exists a constant $c_\Delta$ such that $\|f+g\|\le c_\Delta(\|f\|+\|g\|)$.
\end{itemize}
A quasi-Banach function space $X$ over $\cP$ is a subspace of $L^0(\cP)$ equipped with
a complete quasinorm $\|\cdot\|_X$ that has the following properties:
\begin{itemize}
\item $g\in X$ and $|f|\le |g|$ a.e.$\implies $ $f\in X$ and $\|f\|_X\le \|g\|_X$;
\item $\mu(E)<\infty\implies \|\chi_E\|_X<\infty$;
\item $0\le f_k\uparrow f$ a.e.$\implies \|f_k\|_X\uparrow \|f\|_X$.
\end{itemize}
By the Aoki--Rolewicz theorem (\cite{A}, \cite{R}), there exists a constant $0<\rho\le 1$ such that
\begin{equation}\label{Aoki}
\|\sum_{k=1}^\infty f_k\|_X\le 4^{1/\rho}(\sum_{k=1}^\infty\|f_k\|_X^\rho)^{1/\rho}
\end{equation}
for all $f_1,f_2,\dots\in X$.

A quasinorm $\|\cdot\|_X$ on $X$ is absolutely continuous if $\|f\chi_{E_k}\|_X\to 0$ whenever $f\in X$ and $(E_k)_{k=1}^\infty$ is a decreasing sequence of sets such that $\cap_{k=1}^\infty E_k=\emptyset$.

\subsection{Haj\l asz  spaces}
 
Let $0<s<\infty$. 
A measurable function $g:\cP\to [0,\infty]$ is an  {\em $s$-gradient} of a function $u\in L^0(\cP)$ if there exists a set
$E\subset \cP$ with $\mu(E)=0$ such that for all $x,y\in X\setminus E$,
\begin{equation}\label{eq: gradient}
|u(x)-u(y)|\le d(x,y)^s(g(x)+g(y)).
\end{equation}
The collection of all $s$-gradients of $u$ is denoted by $\cD^s(u)$.

The {\em homogeneous Haj\l asz space} $\dot{M}^{s,X}=\dot{M}^{s,X}(\cP)$ consists of measurable functions $u$ for which
\[
\|u\|_{\dot M^{s,X}}=\inf_{g\in\mathcal{D}^s(u)}\|g\|_X
\]
is finite.
The {\em Haj\l asz space} $M^{s,X}=M^{s,X}(\cP)$ is $\dot M^{s,X}\cap X$  equipped with the norm
\[
\|u\|_{M^{s,X}}=\|u\|_{X}+\|u\|_{\dot M^{s,X}}.
\]

Haj\l asz spaces $M^{s,p}(\cP)=M^{s,L^p}(\cP)$, were intruduced in \cite{H} for $s=1$, $p\ge 1$ and in \cite{Y} for fractional scales. 
Recall that for $p>1$, $M^{1,p}(\rn)=W^{1,p}(\rn)$, see \cite{H}, whereas for $n/(n+1)<p\le 1$, $M^{1,p}(\rn)$ coincides with the Hardy--Sobolev space $H^{1,p}(\rn)$ by \cite[Thm 1]{KS}.


Next two lemmas for $s$-gradients follow easily from the definition, see
\cite[Lemmas 2.4]{KiMa} and \cite[Lemma 2.6]{KL}. 

\begin{lemma}\label{max lemma}
Let $u,v\in L^0(\cP)$, $g\in\cD^s(u)$ and $h\in\cD^s(v)$. 
Then $\max\{g,h\}$ is an $s$-gradient of functions $\max\{u,v\}$ and $\min\{u,v\}$.
\end{lemma}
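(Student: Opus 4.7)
The plan is to reduce the inequality for the max (and then min) to the hypothesized inequalities for $u$ and $v$ via a short case analysis on which function realizes the maximum at each of the two points.

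First, I would pick sets $E_u, E_v \subset \cP$ of measure zero outside of which the defining inequality \eqref{eq: gradient} holds for the pairs $(u,g)$ and $(v,h)$ respectively, and set $E := E_u \cup E_v$, which still has $\mu(E)=0$. I would then fix arbitrary $x,y \in \cP\setminus E$ and aim to establish
\[
\bigl|\max\{u,v\}(x)-\max\{u,v\}(y)\bigr|\le d(x,y)^s\bigl(\max\{g,h\}(x)+\max\{g,h\}(y)\bigr).
\]
By symmetry in $x,y$, I may assume $\max\{u,v\}(x)\ge \max\{u,v\}(y)$, so the absolute value on the left can be dropped. Next, by the symmetry between $u$ and $v$ (which swaps $g$ and $h$ on the right, but leaves $\max\{g,h\}$ unchanged), I may assume $\max\{u,v\}(x)=u(x)$. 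Then, using $\max\{u,v\}(y)\ge u(y)$,
\[
0\le \max\{u,v\}(x)-\max\{u,v\}(y)\le u(x)-u(y)\le |u(x)-u(y)|,
\]
and applying the hypothesis $g\in\cD^s(u)$ at the admissible pair $(x,y)$ gives the bound $d(x,y)^s(g(x)+g(y))$, which is dominated pointwise by $d(x,y)^s(\max\{g,h\}(x)+\max\{g,h\}(y))$. This yields the desired inequality for $\max\{u,v\}$.

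For $\min\{u,v\}$, I would invoke the identity $\min\{u,v\}=-\max\{-u,-v\}$ together with the trivial observation that $\cD^s(u)=\cD^s(-u)$ (since \eqref{eq: gradient} is insensitive to the sign of $u$). By the case just proved applied to $-u,-v$ with the same $s$-gradients $g,h$, the function $\max\{g,h\}$ is an $s$-gradient of $\max\{-u,-v\}$, hence of its negative $\min\{u,v\}$.

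There is no real obstacle here; the only thing to be careful about is bookkeeping of the exceptional sets (taking the union works because it is still of measure zero) and invoking symmetry cleanly so one does not have to write out four sub-cases. The argument uses only the pointwise definition of $s$-gradient and no structure of $\cP$ or $X$.
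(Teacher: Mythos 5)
Your argument is correct; it is the standard elementary case analysis, and the paper itself gives no proof here, instead deferring to \cite[Lemma 2.4]{KiMa} and \cite[Lemma 2.6]{KL}, which argue in essentially the same way. The bookkeeping of exceptional sets, the symmetry reductions, and the reduction of $\min\{u,v\}=-\max\{-u,-v\}$ are all sound.
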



\begin{lemma}\label{sup lemma} 
Let $u_i\in L^0(\cP)$ and $g_i\in \cD^s(u_i)$, $i\in\n$. 
Let $u=\sup_{i\in \n} u_i$ and $g=\sup_{i\in\n}g_{i}$.
If $u\in L^0(\cP)$, then $g\in \cD^s(u)$.
\end{lemma}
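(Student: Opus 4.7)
The plan is to combine the countably many exceptional sets into a single null set and then exploit the fact that the inequality defining an $s$-gradient is homogeneous enough to survive taking a supremum, provided we rearrange it first.

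First I would record the measurability of $g$: each $g_i$ is measurable and $g$ is a countable supremum of measurable functions, hence measurable (and clearly nonnegative). Next, for each $i\in\n$ pick $E_i\subset\cP$ with $\mu(E_i)=0$ such that \eqref{eq: gradient} holds for $u_i,g_i$ on $\cP\setminus E_i$. Let
\[
E=\bigcup_{i=1}^\infty E_i\,\cup\,\{x\in\cP:|u(x)|=\infty\}.
\]
Since $u\in L^0(\cP)$, the second set in the union has measure zero, so $\mu(E)=0$.

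Now fix $x,y\in\cP\setminus E$. The naive approach of taking the supremum inside $|u_i(x)-u_i(y)|$ fails, so instead I would split the absolute value. For every $i\in\n$, since $g_i\le g$ pointwise,
\[
u_i(x)\le u_i(y)+d(x,y)^s\bigl(g_i(x)+g_i(y)\bigr)\le u(y)+d(x,y)^s\bigl(g(x)+g(y)\bigr).
\]
The right-hand side is independent of $i$, so taking the supremum over $i$ on the left yields
\[
u(x)\le u(y)+d(x,y)^s\bigl(g(x)+g(y)\bigr).
\]
By symmetry (swapping the roles of $x$ and $y$) one gets the reverse inequality, and since $u(x),u(y)$ are finite we may combine them to obtain
\[
|u(x)-u(y)|\le d(x,y)^s\bigl(g(x)+g(y)\bigr),
\]
which shows $g\in\cD^s(u)$.

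The only subtle point is the step ``take supremum on the left'', which requires writing the defining inequality in the rearranged form $u_i(x)-u_i(y)\le\cdots$ so that the right-hand side no longer depends on $i$; this is where the argument would fail if one tried to pass the supremum through the absolute value directly. Everything else is bookkeeping about null sets and measurability.
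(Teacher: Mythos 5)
Your proof is correct and is essentially the standard argument; the paper itself does not spell this lemma out but defers to \cite[Lemma~2.6]{KL}, whose proof is exactly the one you give: rearrange the defining inequality so the right side is independent of $i$, take the supremum on the left, and symmetrize. The only bookkeeping you left implicit is that one should also absorb the null sets where each $u_i$ fails to be finite into the $E_i$ (allowed since $u_i\in L^0$), so that $u_i(x)\le u_i(y)+\cdots$ is meaningful for every $i$; with that understood the argument is complete.
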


The following lemma is essentially \cite[Lemma 7.2]{HeKoTu}.

\begin{lemma}\label{gradient for product} 
Let $0<s\le 1$ and let $S\subset \cP$ be a measurable set.
Let $u\colon \cP\to\re$ be a measurable function with $g\in \cD^s(u)$ and
let $\ph$ be a bounded $L$-Lipschitz function supported in $S$.
Then 
\[
h=\big(\|\varphi\|_{\infty} g+(2\|\varphi\|_{\infty}\big)^{1-s}L^s |u|)\ch S \in \cD^s(u\varphi).
\]
Consequently, there exists a constant $C=C(s,\|\varphi\|_\infty,L)$ such that
\[
\|u\ph\|_{M^{s,X}}\le C(\|u\chi_S\|_X+\inf_{g\in \cD^s(u)}\|g\chi_S\|_X)
\]
for every $u\in \dot M^{s,X}$.
\end{lemma}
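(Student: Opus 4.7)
The plan is to verify the pointwise Haj\l asz inequality
\[
|u(x)\varphi(x)-u(y)\varphi(y)|\le d(x,y)^s(h(x)+h(y))
\]
for a.e.\ $x,y\in\cP$ by a case analysis on whether $x,y$ lie in $S$, and then to read off the norm estimate.

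The key algebraic tool is the standard interpolation between the Lipschitz and sup bounds on $\varphi$: since $|\varphi(x)-\varphi(y)|\le Ld(x,y)$ and $|\varphi(x)-\varphi(y)|\le 2\|\varphi\|_\infty$, writing $|\varphi(x)-\varphi(y)|=|\varphi(x)-\varphi(y)|^s|\varphi(x)-\varphi(y)|^{1-s}$ yields
\[
|\varphi(x)-\varphi(y)|\le(2\|\varphi\|_\infty)^{1-s}L^s d(x,y)^s.
\]
When both $x,y\in S$, I split $u(x)\varphi(x)-u(y)\varphi(y)=\varphi(x)(u(x)-u(y))+u(y)(\varphi(x)-\varphi(y))$, bound the first term by $\|\varphi\|_\infty d(x,y)^s(g(x)+g(y))$ using $g\in\cD^s(u)$, and the second by $(2\|\varphi\|_\infty)^{1-s}L^s|u(y)|d(x,y)^s$ using the interpolation above; each summand is dominated by the corresponding part of $h(x)+h(y)$. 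When $x\in S$ and $y\notin S$ (so $\varphi(y)=0$), the difference equals $u(x)\varphi(x)$, and since $\varphi(y)=0$ the same interpolation gives $|\varphi(x)|=|\varphi(x)-\varphi(y)|\le(2\|\varphi\|_\infty)^{1-s}L^s d(x,y)^s$, so $|u(x)\varphi(x)|\le d(x,y)^s h(x)$. The remaining case $x,y\notin S$ is trivial. This establishes $h\in\cD^s(u\varphi)$.

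For the norm estimate, I will use the lattice property of $X$. Since $u\varphi$ is supported in $S$ and $|u\varphi|\le\|\varphi\|_\infty|u|\chi_S$, we get $\|u\varphi\|_X\le\|\varphi\|_\infty\|u\chi_S\|_X$. On the other hand, $h$ is supported in $S$ as well, so
\[
\|h\|_X\le c_\Delta\bigl(\|\varphi\|_\infty\|g\chi_S\|_X+(2\|\varphi\|_\infty)^{1-s}L^s\|u\chi_S\|_X\bigr),
\]
where $c_\Delta$ comes from the quasi-triangle inequality of $\|\cdot\|_X$. Adding the two estimates, taking the infimum over $g\in\cD^s(u)$, and absorbing all constants depending only on $s$, $\|\varphi\|_\infty$ and $L$ into $C$ gives the claim.

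There is no serious obstacle here; the only mildly delicate step is the case $x\in S$, $y\notin S$, where one must remember to exploit $\varphi(y)=0$ both to kill the term $u(y)\varphi(y)$ and to recover a $d(x,y)^s$-factor out of $|\varphi(x)|$ via interpolation. Everything else is bookkeeping with the quasi-norm and the lattice property of $X$.
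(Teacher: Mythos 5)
Your proof is correct and is exactly the standard argument behind this lemma (which the paper itself only cites from \cite[Lemma 7.2]{HeKoTu}): the product decomposition, the interpolation $|\varphi(x)-\varphi(y)|\le(2\|\varphi\|_\infty)^{1-s}(Ld(x,y))^s$ valid for $0<s\le 1$, the case $x\in S$, $y\notin S$ handled via $\varphi(y)=0$, and the lattice/quasi-triangle bookkeeping for the norm estimate. No gaps; the only cosmetic point is that your constant $C$ also absorbs the quasi-triangle constant $c_\Delta$ of $X$, which is fixed throughout the paper.
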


\begin{lemma}\label{cont lemma} Let $0<s\le 1$ and suppose that $X$ has absolutely continuous quasi-norm. Then s-H\"older continuous functions are dense in $M^{s,X}$.
\end{lemma}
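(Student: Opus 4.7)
The argument is a Lusin-type truncation on the level sets of the $s$-gradient. Since $d^s$ satisfies the triangle inequality for $0<s\le 1$, a McShane formula produces $s$-Hölder extensions, and the Hajlasz inequality forces $u$ to be $2\lambda$-Lipschitz with respect to $d^s$ on $E_\lambda:=\{g\le\lambda\}$. I will approximate $u$ by a clipped McShane extension of $u|_{E_\lambda}$ and let $\lambda\to\infty$, using absolute continuity to drive the error to zero.

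\textbf{Reduction and construction.} Fix $x_0\in\cP$ and let $\varphi_k$ be a $(1/k)$-Lipschitz cutoff equal to $1$ on $B(x_0,k)$ and vanishing outside $B(x_0,2k)$. Lemma~\ref{gradient for product} bounds an $s$-gradient of $u(1-\varphi_k)$ by $(g+Ck^{-s}|u|)\chi_{\cP\setminus B(x_0,k)}$, and absolute continuity of $\|\cdot\|_X$ applied to $g$ and $|u|$ on $\cP\setminus B(x_0,k)$ gives $\|u(1-\varphi_k)\|_{M^{s,X}}\to 0$. So we may assume $u$ is supported in a ball $B=B(x_0,r)$, with $g$ likewise supported in $B$. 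Given $\lambda>0$, set $M=\lambda r^s$ and $F=E_\lambda\cup(\cP\setminus 2B)$, and define $\tilde u$ on $F$ by clipping $u$ to $[-M,M]$ on $E_\lambda$ and putting $\tilde u\equiv 0$ on $\cP\setminus 2B$. The coupling $M=\lambda r^s$ is calibrated so that $\tilde u$ is $2\lambda$-Lipschitz on $(F,d^s)$: the only nontrivial case is $x\in E_\lambda\cap B$, $y\in\cP\setminus 2B$, where $|\tilde u(x)|\le\lambda r^s\le\lambda d(x,y)^s$ since $d(x,y)\ge r$. The McShane extension $v_\lambda(x)=\inf_{z\in F}\bigl(\tilde u(z)+2\lambda d(x,z)^s\bigr)$, followed by the clipping $\bar v_\lambda=\max(-M,\min(v_\lambda,M))$, is $s$-Hölder continuous on $\cP$, bounded by $M$, and supported in $2B$; hence $\bar v_\lambda\in M^{s,X}$, with $s$-gradient $2\lambda\chi_{2B}\in X$.

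\textbf{Convergence and main difficulty.} The difference $w=u-\bar v_\lambda$ vanishes on the agreement set $A=(E_\lambda\cap\{|u|\le M\})\cup(\cP\setminus 2B)$ and hence is supported on $D\subset(\{|u|>M\}\cup F_\lambda)\cap 2B$, with $F_\lambda:=\{g>\lambda\}$. The key step is to show that $\tilde g=(g+3\lambda)\chi_D$ is an $s$-gradient of $w$: for $x\in A$ and $y\in D$, combining the Hajlasz inequality for $u$ (using $g(x)\le\lambda$ when $x\in E_\lambda$, and $g(x)=0$ when $x\in\cP\setminus 2B$ since $g$ is supported in $B$) with the $2\lambda$-Hölder bound $|\bar v_\lambda(x)-\bar v_\lambda(y)|\le 2\lambda d(x,y)^s$ yields $|w(y)|\le d(x,y)^s(g(y)+3\lambda)$, while for $x,y\in D$ the triangle inequality suffices. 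Since $g>\lambda$ on $F_\lambda$, $\tilde g\le 4g\chi_{F_\lambda}+(g+3\lambda)\chi_{\{|u|>M\}}$; absolute continuity gives $\|g\chi_{F_\lambda}\|_X,\|g\chi_{\{|u|>M\}}\|_X\to 0$, and the coupling $M=\lambda r^s$ produces $\lambda\|\chi_{\{|u|>M\}}\|_X\le r^{-s}\||u|\chi_{\{|u|>M\}}\|_X\to 0$. A parallel estimate bounds $\|w\|_X$, so $\bar v_\lambda\to u$ in $M^{s,X}$. The main difficulty is precisely this localization of the gradient of $w$ onto the small set $D$: the naive bound $g+2\lambda\chi_{2B}$ has $X$-norm of order $\lambda$ and does not shrink, and it is the coupling $M=\lambda r^s$ between height and gradient truncations that makes $A$ large enough—including all of $\cP\setminus 2B$ plus enough of $E_\lambda$—for the Hajlasz inequality to produce a gradient that vanishes outside $D$.
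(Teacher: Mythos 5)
Your proof is correct and follows the same core strategy as the paper: restrict $u$ to the level set $E_\lambda=\{g\le\lambda\}$, where it is $2\lambda$-H\"older with respect to $d^s$, apply McShane extension, observe that an $s$-gradient of the difference can be taken supported in $\{g>\lambda\}$ (up to the set where $|u|$ is large), and invoke absolute continuity of $\|\cdot\|_X$. The extra localization to a ball and the coupling $M=\lambda r^s$ between the height and gradient truncations make self-contained the two convergence estimates that the paper disposes of more briefly by citing \cite{SYY}, but they do not change the underlying argument.
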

\begin{proof}
Let $u\in M^{s,X}, g\in \cD^s(u)\cap X$ and let $E$ be the exceptional set for \eqref{eq: gradient}.
Then $u$ is $s$-H\"older continuous with constant $2\lambda$ in the set $E_\lambda=\{x\in \cP\setminus E: g(x)\le\lambda\}$. By \cite{McShane}, there is an extension $u_\lambda$ of $u_{|E_\lambda}$
to $\cP$ such that $u_\lambda$ is $s$-H\"older continuous with constant $2\lambda$.
It is easy to see that $(g+3\lambda)\chi_{\cP\setminus E_\lambda}\in\cD^s(u-u_\lambda)$, see \cite[Proposition 4.5]{SYY}. By the absolute continuity of $\|\cdot\|_X$, $\|(g+3\lambda)\chi_{\cP\setminus E_\lambda}\|_X\to 0$ and $\|u-u_\lambda\|_X\le 2\|u\chi_{\cP\setminus E_\lambda}\|_X\to 0$
as $\lambda\to 0$.
\end{proof}






\subsection{$\gamma$-median} 

Let $0<\gamma<1$. The $\gamma$-median of a measurable function
$u\colon \cP\to\rv$ over a set $A$ of finite measure is
\[
m_u^\gamma(A)=\inf\big\{a\in\re: \mu(\{x\in A: u(x)>a\})< \gamma\mu(A)\big\}.
\]
Note that if $u\in L^0(A)$ and $0<\mu(A)<\infty$, then $m_u^\gamma(A)$ is finite.

In the following lemma, we list some basic properties of the $\gamma$-median. 
Properties (a), (b), (d), (f) and (g) follow from \cite[Propositions 1.1 and 1.2]{PT} and (h) and (i) from \cite[Theorem 2.1]{PT}. The remaining properties (c) and (e) follow immediately from the definition.

\begin{lemma}\label{median lemma} 
The $\gamma$-median has the following properties:
\begin{itemize}
\item[(a)] If $\gamma\le\gamma'$, then $m_{u}^{\gamma}(A)\ge m_{u}^{\gamma'}(A)$.
\item[(b)] If $u\le v$ almost everywhere, then  $m_{u}^{\gamma}(A)\le m_{v}^{\gamma}(A)$.
\item[(c)] If $A\subset B$ and $\mu(B)\le C\mu(A)$, then $m_{u}^{\gamma}(A)\le m_{u}^{\gamma/C}(B)$.
\item[(d)] If $c\in\mathbb{R}$, then $m_u^\gamma(A)+c=m_{u+c}^\gamma(A)$.
\item[(e)] If $c>0$, then $m_{c\,u}^\gamma(A)=c\,m_{u}^\gamma(A)$.
\item[(f)] $|m_{u}^\gamma(A)|\le m_{|u|}^{\min\{\gamma,1-\gamma\}}(A)$. 
\item[(g)] $m_{u+v}^\gamma(A)\le m_{u}^{\gamma/2}(A)+m_{v}^{\gamma/2}(A)$.
\item[(h)] If $u$ is continuous, then for every $x\in \cP$,
\[
\lim_{r\to 0} m_{u}^\gamma(B(x,r))=u(x).
\]
\item[(i)]  If $u\in L^0(\cP)$, then 
there exists a set $E$ with $\mu(E)=0$ such that \[\lim_{r\to 0} m_{u}^\gamma(B(x,r))=u(x)\]
 for every $0<\gamma<1$ and $x\in \cP\setminus E$. 
\end{itemize}
\end{lemma}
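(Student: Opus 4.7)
The plan is to settle the elementary items (a)--(g) directly from the definition of $m_u^\gamma$ by analysing the superlevel sets $\{u>a\}\cap A$, and then treat the convergence statements (h) and (i) separately, with (i) relying on the Lebesgue differentiation theorem on doubling metric measure spaces.

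The first key observation is the dual formula $m_u^\gamma(A)=\sup\{a\in\re:\mu(\{u>a\}\cap A)\ge\gamma\mu(A)\}$, which holds because the admissible set in the definition is upward closed. Monotonicity in $\gamma$ and in $u$ yield (a) and (b). For (c), $A\subset B$ together with $\mu(B)\le C\mu(A)$ give, for any $a>m_u^{\gamma/C}(B)$,
\[
\mu(\{u>a\}\cap A)\le\mu(\{u>a\}\cap B)<\tfrac\gamma C\mu(B)\le\gamma\mu(A),
\]
hence $a\ge m_u^\gamma(A)$. Properties (d) and (e) are changes of variable in $a$. For (f) I split into the cases $m_u^\gamma(A)\ge 0$ and $m_u^\gamma(A)<0$: the first uses $\{u>a\}\subset\{|u|>a\}$ for $a\ge 0$ to conclude $m_u^\gamma(A)\le m_{|u|}^\gamma(A)$; the second uses $\{u\le -a\}\subset\{|u|>a'\}$ for $0<a'<a$, together with $\mu(\{u\le-a\}\cap A)>(1-\gamma)\mu(A)$ (which comes from $-a>m_u^\gamma(A)$), to give $|m_u^\gamma(A)|\le m_{|u|}^{1-\gamma}(A)$. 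For (g) I take $a>m_u^{\gamma/2}(A)$ and $b>m_v^{\gamma/2}(A)$, apply the inclusion $\{u+v>a+b\}\subset\{u>a\}\cup\{v>b\}$ to obtain $\mu(\{u+v>a+b\}\cap A)<\gamma\mu(A)$, hence $a+b\ge m_{u+v}^\gamma(A)$, and let $a,b$ decrease to their respective infima.

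For (h), continuity gives $|u-u(x)|<\varepsilon$ on $B(x,r)$ for all sufficiently small $r$, so the superlevel sets $\{u>u(x)\pm\varepsilon\}\cap B(x,r)$ have measure $0$ or $\mu(B(x,r))$, and the definition delivers $|m_u^\gamma(B(x,r))-u(x)|\le\varepsilon$ for every $0<\gamma<1$. For (i), I apply the Lebesgue differentiation theorem to $\chi_{\{u>\alpha\}}$ for each rational $\alpha$: outside a single null set $E$ (which I also enlarge by the null set $\{|u|=\infty\}$), every $x\in\cP\setminus E$ satisfies
\[
\lim_{r\to 0}\frac{\mu(\{u>\alpha\}\cap B(x,r))}{\mu(B(x,r))}=\chi_{\{u>\alpha\}}(x)\in\{0,1\}
\]
simultaneously for every rational $\alpha$. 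Given $\varepsilon>0$ and $0<\gamma<1$, pick rationals $\alpha_1\in(u(x)-\varepsilon,u(x))$ and $\alpha_2\in(u(x),u(x)+\varepsilon)$; for sufficiently small $r$ the density at $\alpha_2$ is smaller than $\gamma$, forcing $m_u^\gamma(B(x,r))\le\alpha_2<u(x)+\varepsilon$, while the density at $\alpha_1$ equals $1>\gamma$, forcing $m_u^\gamma(B(x,r))\ge\alpha_1>u(x)-\varepsilon$. The main subtlety is precisely this last step: one must obtain a single null exceptional set that is uniform in $\gamma$, which works because the two-sided density dichotomy above is zero-one and therefore independent of $\gamma\in(0,1)$, so the countable union over rational thresholds $\alpha$ suffices.
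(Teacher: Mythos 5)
Your proof is correct. It is worth noting that the paper itself does not prove this lemma: it simply cites Propositions~1.1--1.2 and Theorem~2.1 of Poelhuis--Torchinsky~\cite{PT} for items (a), (b), (d), (f), (g), (h), (i), and states that (c) and (e) are immediate from the definition. So a line-by-line comparison with ``the paper's proof'' is not possible; what you have done is reconstruct the external reference.

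Your reconstruction follows the standard route and all the computations check out. The dual formula $m_u^\gamma(A)=\sup\{a:\mu(\{u>a\}\cap A)\ge\gamma\mu(A)\}$ is valid because the admissible set is upward closed and, for $u\in L^0(A)$ with $0<\mu(A)<\infty$, both sides are finite (which you are implicitly using throughout), and items (a)--(g) follow cleanly by the superlevel-set manipulations you describe. The only place deserving closer scrutiny is (f), case $m_u^\gamma(A)<0$: if you spell it out, for $0<a<|m_u^\gamma(A)|$ you indeed get $\mu(\{u\le -a\}\cap A)>(1-\gamma)\mu(A)$ because $-a>m_u^\gamma(A)$ is admissible, and passing from $\{u\le -a\}$ to $\{|u|>a'\}$ with $a'<a$ (and then using (a) to replace $1-\gamma$ by $\min\{\gamma,1-\gamma\}$) closes the argument. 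For (i) your key observation --- that the exceptional set built from Lebesgue points of the countably many indicators $\chi_{\{u>\alpha\}}$, $\alpha\in\mathbb{Q}$, together with $\{|u|=\infty\}$, is $\gamma$-independent because the limiting densities are $0$ or $1$ --- is exactly the subtlety that makes the uniform-in-$\gamma$ formulation work on a doubling space. This is what the cited Theorem~2.1 of~\cite{PT} delivers; your argument is a correct and essentially the natural self-contained version of it.
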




\subsection{Discrete maximal functions}
In this subsection, we define ''discrete'' versions of the Hardy--Littlewood maximal function
\[
\M_R u(x)=\sup_{0<r<R}\, |u|_{B(x,r)}.
\]
and the median maximal function
\[
\M^\gamma_R u(x)=\sup_{0<r<R}\,m_{|u|}^\gamma(B(x,r)).
\]

We first recall the definition of a discrete convolution. 
Discrete convolutions are basic tools in harmonic analysis in homogeneous spaces, see for example \cite{CW} and \cite{MS}.
The following lemma is well known.

\begin{lemma}\label{partition of unity}
For every $r>0$, there exists a collection of balls $\{B_i=B(x_i,r):i\in I\}$, where $ 
I\subset\n$, and functions $\ph_i:\cP\to [0,1]$, $i\in I$, with the following properties: 
\begin{itemize}
\item[(a)] The balls $B(x_i,r/2)$, $i\in I$, cover $\cP$.
\item[(b)] $\sum_{i\in I}\ch{2B_i}\le C$.
\item[(c)] For every $i\in I$, $\ph_i$ is $C/r$-Lipschitz, $\ph_i\ge C^{-1}$ on $B_i$ and $\ph_i=0$ outside $2B_i$.
\item[(d)] $\sum_{i\in I}\ph_i=1$. 
\end{itemize}
Here, the constant $C$ depends only on the doubling constant $c_d$.

\end{lemma}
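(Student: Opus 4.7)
The plan is standard: I would choose ball centers as a maximal $r/2$-separated set (the classical greedy packing in doubling spaces), then build the partition of unity by normalizing simple tent functions supported on the doubled balls. Since $\cP$ is doubling it is separable, so by Zorn's lemma (or an explicit greedy recursion) one extracts a countable maximal family $\{x_i:i\in I\}$, $I\subset\n$, with $d(x_i,x_j)\ge r/2$ whenever $i\ne j$. Maximality immediately yields (a): if some $y\in\cP$ avoided every $B(x_i,r/2)$, the enlarged family $\{x_i\}\cup\{y\}$ would still be $r/2$-separated, a contradiction.

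For (b), the bounded overlap, I would run a standard volume packing argument. Fix $y\in\cP$ and let $J=\{i\in I:y\in 2B_i\}$. Each $x_i$ with $i\in J$ lies in $B(y,2r)$, and by the $r/2$-separation the balls $B(x_i,r/4)$, $i\in J$, are pairwise disjoint and contained in $B(y,3r)$. The doubling property (iterated a bounded number of times to pass from scale $5r$ down to $r/4$) gives $\mu(B(x_i,r/4))\ge c^{-1}\mu(B(y,3r))$ with $c=c(c_d)$, and summing the disjoint measures over $J$ forces $|J|\le c$. This packing estimate is really the only step with any content.

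For the partition itself I would set $\psi_i(y)=\min\{1,\max\{0,2-d(y,x_i)/r\}\}$, so that each $\psi_i$ is $(1/r)$-Lipschitz, equals $1$ on $B_i$ and vanishes off $2B_i$. With $S=\sum_{i\in I}\psi_i$ and $\varphi_i=\psi_i/S$, property (d) is automatic. By (a), some $\psi_i$ equals $1$ at every point, so $S\ge 1$ pointwise; by (b), $S\le\sum_i\ch{2B_i}\le C$. The two-sided bound on $S$, combined with the $(1/r)$-Lipschitz bound on each $\psi_i$ and the resulting $(C/r)$-Lipschitz bound on $S$ (via the overlap), yields by the quotient rule that each $\varphi_i$ is $(C/r)$-Lipschitz, which is (c); finally $\varphi_i\ge C^{-1}$ on $B_i$ because $\psi_i=1$ and $S\le C$ there. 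The main (and really only) obstacle is bookkeeping how the overlap constant from (b) propagates into the Lipschitz constant of $\varphi_i$; everything else is formal.
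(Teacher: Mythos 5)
Your construction is correct and is exactly the standard argument the paper has in mind: the paper states this lemma without proof, citing it as well known (referring to the discrete-convolution literature, e.g.\ Coifman--Weiss and Mac\'ias--Segovia). The maximal $r/2$-separated set gives (a), the volume-packing argument gives (b), and the normalized tent functions give (c) and (d), with the only care needed being the propagation of the overlap constant into the Lipschitz bound for $S=\sum_i\psi_i$ (at most $2C$ summands can differ at two given points), which you correctly identify.
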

For each scale $r>0$, we choose a collection of balls $\{B_i:i\in I\}$ and a collection of functions $\{\ph_i:i\in I\}$ satisfying conditions (a)--(d) of Lemma \ref{partition of unity}.

\begin{definition}\label{def M*}
The \emph{discrete convolution} of a locally integrable function $u$ at the scale $r$  is
\begin{equation}\label{discrete convolution}
u_r=\sum_{i\in I}u_{B_i}\ph_i,
\end{equation}
where $\{B_i:i\in I\}$ and $\{\ph_i:i\in I\}$ are the chosen collections of balls and functions for the scale $r$.

The \emph{discrete  maximal function} of $u$ is $\M^*_R u\colon \cP\to\rv$,
\[
\M^*_R u(x)=\sup_{q\in \mathbb{Q}, 0<q<R}|u|_q(x).
\]
\end{definition}
The discrete maximal function, which can be seen as a smooth version of the Hardy--Littlewood maximal function, was introduced in \cite{KL}. 

Similarly, we define median versions of a discrete convolution and a discrete maximal function. 

\begin{definition}\label{def M gamma *}
Let $0<\gamma<1$. 
The \emph{discrete $\gamma$-median convolution} of a function $u\in L^0$ at scale $r>0$ is
\[
u_r^\gamma=\sum_{i\in I}m_{u}^\gamma(B_i)\ph_i,
\]
where $\{B_i:i\in I\}$ and $\{\ph_i:i\in I\}$ are as in the definition
\ref{def M*}.

The \emph{discrete $\gamma$-median maximal function} of $u$ is $\M^{\gamma,*}_R u\colon \cP\to\rv$,
\[
\M^{\gamma,*}_R u(x)=\sup_{q\in \mathbb{Q}, 0<q<R}|u|_{q}^\gamma(x).
\]
\end{definition}
As a supremum of continuous functions, the discrete maximal functions are lower semicontinuous and hence measurable.


\begin{lemma}\label{comparability}
Let $0<R\le\infty$. There exists a constant $C=C(c_d)\ge 1$ such that
\begin{equation}\label{comparability for maximal functions}
C^{-1}\M_{R/2} u\le \M^*_R u\le C\M_{3R} u
\end{equation}
for every $u\in L^1_{\text{loc}}$ and
\begin{equation}\label{comparability for median maximal functions}
\M^{\gamma}_{R/2} u\le C\M^{\gamma/C,*}_{R}u \ \ \text{ and }\ \ 
\M^{\gamma,*}_{R}u \le C\M^{\gamma/C}_{3R}u
\end{equation}
for every $u\in L^0$ and $0<\gamma<1$.
\end{lemma}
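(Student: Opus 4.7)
All four inequalities follow the same template: pick a scale $q$ matched to the ball radius $r$, use property (a) of Lemma~\ref{partition of unity} to pin $x$ inside a single partition ball $B_{i_0}$, and use the doubling condition to pass between $B_{i_0}$ and the ball $B(x,r)$ (or $B(x,3q)$, respectively). The median versions are identical in structure; the only change is that each comparison of averages is replaced by the monotonicity of medians in $\gamma$ (Lemma~\ref{median lemma}(a),(c)), which forces the parameter $\gamma$ to shrink to $\gamma/C$.

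\textbf{Left inequalities.} Fix $x\in\cP$ and $0<r<R/2$. Since $2r<R$, the interval $(2r,\min\{3r,R\})$ is nonempty and contains a rational $q$, and regardless of which case occurs, $2r<q\le 3r$. By Lemma~\ref{partition of unity}(a), choose $i_0\in I$ with $d(x,x_{i_0})<q/2$. Because $r+q/2<q$, we have $B(x,r)\subset B_{i_0}$, and because $\mu(B(x_{i_0},3q/2))\le C\mu(B(x,r))$ by the doubling condition (with constant depending only on $c_d$, since $q/r\le 3$), we also get $\mu(B_{i_0})\le C\mu(B(x,r))$. By Lemma~\ref{partition of unity}(c), $\ph_{i_0}(x)\ge C^{-1}$. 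In the integral case,
\[
|u|_{B(x,r)}\le \frac{C}{\mu(B_{i_0})}\int_{B_{i_0}}|u|\,d\mu = C|u|_{B_{i_0}}\le C|u|_q(x)\le C\M^*_R u(x).
\]
In the median case, Lemma~\ref{median lemma}(c) gives $m_{|u|}^\gamma(B(x,r))\le m_{|u|}^{\gamma/C}(B_{i_0})$, and then $|u|_q^{\gamma/C}(x)\ge \ph_{i_0}(x)\,m_{|u|}^{\gamma/C}(B_{i_0})\ge C^{-1}m_{|u|}^{\gamma/C}(B_{i_0})$, so $m_{|u|}^\gamma(B(x,r))\le C\M^{\gamma/C,*}_R u(x)$. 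Taking the supremum over $r<R/2$ yields the left inequalities.

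\textbf{Right inequalities.} Fix $x$ and rational $q\in(0,R)$. For each $i\in I$ with $\ph_i(x)>0$, property (c) gives $x\in 2B_i$, hence $B_i\subset B(x,3q)$ and by doubling $\mu(B(x,3q))\le C\mu(B_i)$. For the integral version,
\[
|u|_{B_i}\le \frac{C}{\mu(B(x,3q))}\int_{B(x,3q)}|u|\,d\mu = C|u|_{B(x,3q)}\le C\M_{3R}u(x),
\]
and summing against $\sum_i\ph_i(x)=1$ gives $|u|_q(x)\le C\M_{3R}u(x)$. For the median version, Lemma~\ref{median lemma}(c) gives $m^\gamma_{|u|}(B_i)\le m^{\gamma/C}_{|u|}(B(x,3q))\le \M^{\gamma/C}_{3R}u(x)$, and summing yields $|u|_q^\gamma(x)\le C\M^{\gamma/C}_{3R}u(x)$. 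Taking the supremum over rational $q<R$ finishes the proof.

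\textbf{Expected obstacle.} Nothing delicate happens here: the only mild subtlety is picking a rational $q$ both large enough to force $B(x,r)\subset B_{i_0}$ (so that $q>2r$) and small enough that $q/r$ is bounded by a dimensional constant (so that doubling yields comparable measures). The condition $r<R/2$ is precisely what makes the interval $(2r,R)$ nonempty, which is why the cutoff $R/2$ appears on the left-hand side while $3R$ appears on the right.
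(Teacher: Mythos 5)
Your proof is correct and follows essentially the same route as the paper's: compare each partition ball $B_i$ with a concentric ball around $x$ via the doubling condition, and convert between medians at parameters $\gamma$ and $\gamma/C$ using Lemma \ref{median lemma}(c). The only (welcome) difference is that you explicitly handle the restriction to rational scales $q$ in the left-hand inequalities, a point the paper's proof passes over silently.
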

\begin{proof}
We will prove \eqref{comparability for median maximal functions}. The proof of \eqref{comparability for maximal functions} is similar.
Let $x\in\cP$, $r>0$ and let $u^\gamma_r=\sum_{i\in I}m_{u}^\gamma(B_i)\ph_i$ be as in the definition \ref{def M gamma *}.
If $x\in 2B_i$, then $B_i\subset B(x,3r)$. By the doubling property,
$\mu(B(x,3r))\le C\mu(B_i)$, and so, by Lemma \ref{median lemma}(c),
$m^\gamma_u(B_i)\le m^{\gamma/C}_u(B(x,3r))$. Since $\sum_{i\in I}\chi_{2B_i}(x)\le C$, it follows that
$$u^\gamma_r(x)\le C m^{\gamma/C}(B(x,3r)).$$

On the other hand, since the balls $\frac12 B_i$, $i\in I$, cover $\cP$, there is $i\in I$ such that $B(x,r/2)\subset B_i$. By the doubling property,
$\mu(B_i)\le C\mu(B(x,r/2))$, and so, by Lemma \ref{median lemma}(c),
$m^\gamma_u(B(x,r/2))\le m^{\gamma/C}_u(B_i)$. Since $\ph_i\ge C^{-1}$ on $B_i$, we have that 
$$m^\gamma_u(B(x,r/2))\le C u^{\gamma/C}_r(x).$$
The claim \eqref{comparability for median maximal functions} follows immediately from these estimates.
\end{proof}

\section{Capacity}\label{sec: capacity}
In this section, we define the $M^{s,X}$-capacity and prove some of its basic properties.

\begin{definition}
Let $0<s<\infty$. The $M^{s,X}$-capacity of a set $E\subset \cP$ is
\[
C_{M^{s,X}}(E)=\inf\Big\{\|u\|_{M^{s,X}}: u\in\mathcal A_{M^{s,X}}(E)\Big\},
\]
where 
\[
\mathcal A_{M^{s,X}}(E)=\big\{u\in M^{s,X}: u\ge 1 \text{ on a neighbourhood of } E\big\}
\] 
is a set of admissible functions for the capacity.
We say that a property holds \emph{quasieverywhere} if it holds outside a set of $M^{s,X}$-capacity zero.
\end{definition}

\begin{remark} 
Lemma \ref{max lemma} easily implies that
\[
C_{M^{s,X}}(E)=\inf\Big\{\|u\|_{M^{s,X}}: u\in\mathcal A_{M^{s,X}}'(E)\Big\},
\]
where
$\mathcal A_{M^{s,X}}'(E)=\{u\in \cA_{M^{s,X}}(E): 0\le u\le 1\}$. 
\end{remark}

\begin{remark}\label{cap remark 2} 
It is easy to see that the $M^{s,X}$-capacity is an outer capacity, which means that
\[
C_{M^{s,X}}(E)=\inf\big\{C_{M^{s,X}}(U): U\supset E,\ U \text{ is open}\big\}.
\]
\end{remark}

The $M^{s,X}$-capacity is not generally subadditive, but for most purposes, it suffices that it
satisfies inequality \eqref{eq: r-subadd} below.

\begin{lemma}\label{r-subadd} 
Let $0<s<\infty$ and let $\rho$ be the constant from \eqref{Aoki}.
Then 
\begin{equation}\label{eq: r-subadd}
C_{M^{s,X}}\big(\bigcup_{i\in I}E_i\big)^\rho\le 8\sum_{i\in I} C_{M^{s,X}}(E_i)^\rho
\end{equation}
whenever $E_i\subset \cP$, $i\in I\subset\n$.
\end{lemma}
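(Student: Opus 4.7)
The plan is to stitch together near-optimal admissible functions for the individual sets $E_i$ into a single admissible function for $\bigcup_i E_i$ using the Aoki--Rolewicz $\rho$-inequality \eqref{Aoki}, and keep careful bookkeeping of the $\rho$-powers.

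First I would dispose of the trivial case: if $\sum_i C_{M^{s,X}}(E_i)^\rho=\infty$, there is nothing to prove, so assume this sum is finite. Fix $\eps>0$. By the remark immediately preceding the lemma, for each $i\in I$ we can pick $u_i\in\cA_{M^{s,X}}'(E_i)$ with $0\le u_i\le 1$, $u_i\ge 1$ on an open neighbourhood of $E_i$, and
\[
\|u_i\|_{M^{s,X}}^\rho\le C_{M^{s,X}}(E_i)^\rho+\eps\,2^{-i}.
\]
By definition of $\|\cdot\|_{\dot M^{s,X}}$, we can also choose $g_i\in\cD^s(u_i)$ with $\|g_i\|_X^\rho\le \|u_i\|_{\dot M^{s,X}}^\rho+\eps\,2^{-i}$.

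Next I would set $u=\sum_{i\in I}u_i$ and $g=\sum_{i\in I}g_i$ (nonnegative, hence well defined as measurable functions). The countable sum $u$ is $\ge 1$ on the neighbourhood of $E_{i_0}$ obtained from any single $u_{i_0}$, hence $u\ge 1$ on a neighbourhood of $\bigcup_i E_i$. That $g\in\cD^s(u)$ is a direct verification: outside the (countable) union of the exceptional sets for each $u_i$, the telescoping estimate
\[
|u(x)-u(y)|\le \sum_i|u_i(x)-u_i(y)|\le d(x,y)^s\sum_i\bigl(g_i(x)+g_i(y)\bigr)=d(x,y)^s\bigl(g(x)+g(y)\bigr)
\]
holds. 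Now the main step: \eqref{Aoki} applied to $(u_i)$ and $(g_i)$ yields
\[
\|u\|_X^\rho\le 4\sum_i\|u_i\|_X^\rho,\qquad \|g\|_X^\rho\le 4\sum_i\|g_i\|_X^\rho,
\]
and in particular $u\in M^{s,X}$ by the hypothesis on the $\rho$-sums. Since $\|u\|_{M^{s,X}}\le\|u\|_X+\|g\|_X$ and $(a+b)^\rho\le a^\rho+b^\rho$ for $0<\rho\le 1$, we conclude
\[
\|u\|_{M^{s,X}}^\rho\le 4\sum_i\bigl(\|u_i\|_X^\rho+\|g_i\|_X^\rho\bigr).
\]
Using $\|u_i\|_X\le\|u_i\|_{M^{s,X}}$ and $\|g_i\|_X^\rho\le \|u_i\|_{\dot M^{s,X}}^\rho+\eps\,2^{-i}\le \|u_i\|_{M^{s,X}}^\rho+\eps\,2^{-i}$, each summand is at most $2\|u_i\|_{M^{s,X}}^\rho+\eps\,2^{-i}$. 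Combined with admissibility of $u$,
\[
C_{M^{s,X}}\Bigl(\bigcup_i E_i\Bigr)^\rho\le \|u\|_{M^{s,X}}^\rho\le 8\sum_i C_{M^{s,X}}(E_i)^\rho+O(\eps),
\]
and letting $\eps\to 0$ gives \eqref{eq: r-subadd}.

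The only real subtlety is the power-tracking: one must resist the temptation to apply the triangle inequality before raising to the power $\rho$, since the constant in Aoki--Rolewicz only behaves well at the level of $\rho$-sums. A second small point is ensuring admissibility of the assembled $u$, which is why I would keep $u_i\ge 0$ via the remark and use $u=\sum_i u_i$ rather than $\sup_i u_i$; both work, but the sum makes the Aoki--Rolewicz step and the lower bound on neighbourhoods of $\bigcup_i E_i$ essentially automatic.
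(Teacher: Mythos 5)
Your proof is correct and follows essentially the same route as the paper: pick near\mbox{-}optimal admissible functions $u_i$ with $2^{-i}\eps$ corrections at the level of $\rho$-powers, assemble them into one admissible function, and apply the Aoki--Rolewicz inequality \eqref{Aoki} together with $(a+b)^\rho\le a^\rho+b^\rho$. The only (cosmetic) difference is that the paper takes $u=\sup_i u_i$ and $g=\sup_i g_i$ (so admissibility and the gradient property come from Lemma \ref{sup lemma}, and Aoki--Rolewicz is applied after dominating the suprema by the sums), whereas you work with the sums directly and verify the gradient property by telescoping; both yield the constant $8$.
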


\begin{proof} 
Let $\eps>0$. 
We may assume that $ \sum_{i\in I} C_{M^{s,X}}(E_i)^{\rho}<\infty$.
There are functions $u_i\in \cA'_{M^{s,X}}(E_i)$ with $g_i\in \cD^s(u_i)$ such that 
\[
\Big(\|u_i\|_{X}+\|g_i\|_{X}\Big)^{\rho}< C_{M^{s,X}}(E_i)^\rho+2^{-i}\eps.
\] 
By Lemma \ref{sup lemma},  $u=\sup_{i\in I} u_i\in \cA'_{M^{s,X}}(\cup_{i\in I} E_i)$ and 
$g=\sup_{i\in I}g_{i}\in\cD^s(u)$. Hence
\[
\begin{split}
C_{M^{s,X}}\Big(\bigcup_{i\in I} E_i\Big)^\rho&\le (\|u\|_X+\|g\|_X)^\rho\\
&\le \|u\|_X^\rho+\|g\|_X^\rho\\
&\le 4\sum\Big( \|u_i\|_X^\rho+\|g_i\|_X^\rho\Big)\\
&\le 8\sum\Big( \|u_i\|_X+\|g_i\|_X\Big)^\rho\\
&\le 8\Big(\eps+ \sum_{i\in I} C_{M^{s,X}}(E_i)^\rho\Big)
\end{split}
\]
and the claim follows by letting $\eps\to 0$. 
\end{proof}

A function $u:\cP\to[-\infty,\infty]$ is \emph{quasicontinuous}
if for every $\eps>0$, there exists a set $E\subset \cP$ such that $C_{M^{s,X}}(E)<\eps$ and the restriction of $u$ to $\cP\setminus E$ is continuous. By Remark \ref{cap remark 2}, the set $E$ can be chosen to be open.

The following lemma follows from a result of Kilpel\"ainen \cite{K}.
\begin{lemma}\label{Kil}
Suppose that $u$ and $v$ are quasicontinuous. If $u=v$ almost everywhere in an open set $U$, then $u=v$ quasieverywhere in $U$.
\end{lemma}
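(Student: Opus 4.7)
The plan is to set $w:=u-v$, which is quasicontinuous (the difference of two quasicontinuous functions is quasicontinuous by applying Lemma~\ref{r-subadd} to the union of their exceptional sets) and equals $0$ almost everywhere on $U$ by hypothesis, and to show that $\{x\in U:w(x)\neq 0\}$ has $M^{s,X}$-capacity zero. Writing this set as $\bigcup_{n\in\n} F_n$ with $F_n:=\{x\in U:|w(x)|>1/n\}$, the $\rho$-subadditivity of the capacity (Lemma~\ref{r-subadd}) reduces the task to showing $C_{M^{s,X}}(F_n)=0$ for every $n$.

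Fix $n\in\n$ and $\eps>0$. Using the quasicontinuity of $w$, I would pick an open set $E\subset\cP$ with $C_{M^{s,X}}(E)<\eps$ such that $w|_{\cP\setminus E}$ is continuous, together with an admissible function $\phi\in\cA_{M^{s,X}}(E)$ satisfying $\|\phi\|_{M^{s,X}}<2\eps$ and $\phi\ge 1$ on some open $E'\supset E$. Continuity of $w$ on $\cP\setminus E$ makes $\{|w|>1/n\}\cap(\cP\setminus E)$ relatively open in $\cP\setminus E$, so there exists an open set $O\subset\cP$ with $O\cap(\cP\setminus E)=\{|w|>1/n\}\cap(\cP\setminus E)$. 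Setting $W:=O\cap U$ yields an open subset of $\cP$ with $F_n\setminus E\subset W$ and
\[
W\setminus E = O\cap U\cap(\cP\setminus E)\subset \{w\neq 0\}\cap U,
\]
which has measure zero since $w=0$ a.e.\ on $U$. In particular $W\setminus E'$ is also a null set, so redefining $\phi$ to equal $1$ on $W\setminus E'$ preserves both its equivalence class in $M^{s,X}$ and every one of its $s$-gradients (the measure-zero exceptional set in the definition of an $s$-gradient absorbs $W\setminus E'$), while making the resulting function $\ge 1$ pointwise on the open neighborhood $E'\cup W\supset F_n$ of $F_n$. Consequently $C_{M^{s,X}}(F_n)\le 2\eps$, and letting $\eps\to 0$ gives $C_{M^{s,X}}(F_n)=0$.

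The main obstacle is upgrading a function admissible for $E$ into one admissible for the potentially larger set $F_n$. The remedy is the construction of $W$: continuity of $w$ off $E$ forces $F_n\setminus E$ into an open superset contained in $U$, after which the openness of $U$ together with the a.e.\ vanishing of $w$ on $U$ guarantees that $W$ lies inside $E$ up to a null set, and that null set can then be absorbed by a harmless pointwise modification of $\phi$. The openness of $U$ is essential here: without it, $W$ could spill into a region where $w$ need not vanish a.e., and the key measure-zero conclusion would fail.
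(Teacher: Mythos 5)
Your argument is correct. Note, however, that the paper does not prove this lemma at all: it simply invokes Kilpel\"ainen's uniqueness theorem for quasicontinuous functions \cite{K}. What you have written is, in effect, a self-contained transplantation of Kilpel\"ainen's argument to the present setting, and it checks out: the reduction to $F_n=\{x\in U:|u(x)-v(x)|>1/n\}$, the use of the outer/open-set form of quasicontinuity to produce the open set $O$ with $O\cap(\cP\setminus E)=\{|w|>1/n\}\cap(\cP\setminus E)$, the observation that $W\setminus E=O\cap U\cap(\cP\setminus E)$ is a null set because $w=0$ a.e.\ on the \emph{open} set $U$, and the harmless modification of the admissible function on that null set (which changes neither $\|\cdot\|_X$ nor the class of $s$-gradients, since the exceptional set in \eqref{eq: gradient} absorbs it) all work exactly as claimed. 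There is one point where your version genuinely adds something beyond the citation: Kilpel\"ainen's theorem is stated for a monotone, \emph{countably subadditive} outer capacity, whereas $C_{M^{s,X}}$ is only known to satisfy the $\rho$-subadditivity of Lemma \ref{r-subadd}; your proof shows explicitly that this weaker property suffices (it is used only to sum the zero capacities of the $F_n$ and to see that $u-v$ is quasicontinuous), which is precisely the verification one would otherwise have to make when applying \cite{K}. The only cosmetic caveat is the possible indeterminacy of $u-v$ on the null set where both functions are infinite; one can either restrict attention to a.e.\ finite representatives (as in all applications in the paper) or argue directly with the sets $\{x\in U: |u(x)-v(x)|>1/n\}$ interpreted via a metric on $\rv$.
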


\begin{lemma}\label{quasicont rep} Suppose that $X$ has absolutely continuous quasi-norm. Then,
for every $u\in \dot M^{s,X}$, there exists quasicontinuous
$u^*$ such that $u=u^*$ almost everywhere.
\end{lemma}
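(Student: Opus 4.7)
The plan is to reduce to the local (inhomogeneous) setting by multiplication with Lipschitz cutoffs, approximate by H\"older continuous functions, and extract a quasicontinuous representative via a standard Cauchy-in-capacity argument, then glue.

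Fix a point $x_0\in\cP$ and let $B_k=B(x_0,k)$, so $\cP=\bigcup_k B_k$. Choose a bounded Lipschitz cutoff $\varphi_k$ with $\varphi_k=1$ on $B_k$, $\varphi_k=0$ outside $2B_k$. By Lemma \ref{gradient for product}, $u\varphi_k\in M^{s,X}$, and by Lemma \ref{cont lemma} there exist $s$-H\"older continuous functions $v_{k,j}$ with $v_{k,j}\to u\varphi_k$ in $M^{s,X}$. After passing to a subsequence, assume $\|v_{k,j+1}-v_{k,j}\|_{M^{s,X}}<2^{-j}$. The crucial observation is that the set
\[
A_{k,j}=\{x\in\cP:\,|v_{k,j+1}(x)-v_{k,j}(x)|>2^{-j/2}\}
\]
is \emph{open} because $v_{k,j}$ is continuous, and $2^{j/2}|v_{k,j+1}-v_{k,j}|$ is admissible for its capacity, so $C_{M^{s,X}}(A_{k,j})\le 2^{j/2}\cdot 2^{-j}=2^{-j/2}$.

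Setting $E_{k,N}=\bigcup_{j\ge N}A_{k,j}$, Lemma \ref{r-subadd} yields $C_{M^{s,X}}(E_{k,N})^{\rho}\le 8\sum_{j\ge N}2^{-j\rho/2}\to 0$ as $N\to\infty$. On $\cP\setminus E_{k,N}$ the sequence $(v_{k,j})_{j\ge N}$ is uniformly Cauchy, hence converges uniformly to a continuous function; letting $F_k=\bigcap_N E_{k,N}$ we have $C_{M^{s,X}}(F_k)=0$, and the pointwise limit $w_k$ on $\cP\setminus F_k$ is continuous on each $\cP\setminus E_{k,N}$. Since convergence in $M^{s,X}$ implies convergence in $X$, a further subsequence converges a.e.\ to $u\varphi_k$, so $w_k=u\varphi_k$ a.e.; in particular $w_k=u$ a.e.\ on $B_k$, and $w_k$ is a quasicontinuous representative of $u\varphi_k$ (given $\varepsilon>0$, use Remark \ref{cap remark 2} to enlarge $F_k$ and an appropriate $E_{k,N}$ into an open set of capacity $<\varepsilon$).

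It remains to glue. By Lemma \ref{Kil}, $w_k=w_{k+1}$ quasieverywhere in the open set $B_k$; let $N_k$ denote the exceptional set. Put $N=\bigcup_k F_k\cup\bigcup_k N_k$, a set of capacity zero by Lemma \ref{r-subadd}, and define $u^*(x)=w_k(x)$ for any $k$ with $x\in B_k\setminus N$ (arbitrary on $N$); consistency across $k$ holds off $N$, and $u^*=u$ a.e. For quasicontinuity, given $\varepsilon>0$, choose by outer capacity open sets $U_k\supset F_k\cup N_k$ witnessing quasicontinuity of $w_k$ on $B_k$ with $C_{M^{s,X}}(U_k)^\rho<\varepsilon^\rho 2^{-k}/8$, and an open $V\supset N$ of small capacity; then $U=V\cup\bigcup_k U_k$ is open with $C_{M^{s,X}}(U)\lesssim\varepsilon$ by Lemma \ref{r-subadd}, and $u^*|_{\cP\setminus U}$ is continuous since near any such point, $u^*$ agrees with some $w_k$ on $\cP\setminus U_k$.

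The main obstacle is the homogeneity issue: because $u\in\dot M^{s,X}$ need not lie in $X$, Lemma \ref{cont lemma} cannot be applied to $u$ directly. The localization via Lemma \ref{gradient for product} circumvents this, and the gluing step relies essentially on Lemma \ref{Kil} together with the quasi-subadditivity of the capacity; the remaining Cauchy-in-capacity machinery is routine.
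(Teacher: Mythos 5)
Your argument is correct and essentially identical to the paper's: approximate the localized pieces $u\varphi_k\in M^{s,X}$ by continuous functions via Lemma \ref{cont lemma}, run the standard Cauchy-in-capacity argument with admissible functions $2^{j/2}|v_{k,j+1}-v_{k,j}|$ (the paper uses thresholds $2^{-i}$ against norms $2^{-2i}$, which is the same device), and glue via Lemma \ref{Kil} and Lemma \ref{r-subadd}. The only step you assert without justification is that convergence in $X$ yields an a.e.\ convergent subsequence; the paper covers exactly this point by citing \cite[Lemma 3.3]{CGO}, so you should do likewise.
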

\begin{proof} Suppose first that $u\in M^{s,X}$.
By Lemma \ref{cont lemma}, there are continuous functions $u_i\in M^{s,X}$ converging to $u$ in $M^{s,X}$ such that
\[
\|u_i-u_{i+1}\|_{M^{s,X}}<2^{-2i}
\]
for every $i\in\n$. Moreover, by \cite[Lemma 3.3]{CGO}, we may assume that $u_i\to u$ pointwise almost everywhere.
Denote $E_i=\{x\in\cP: |u_i(x)-u_{i+1}(x)|>2^{-i}\}$ and $F_j=\cup_{i=j}^\infty E_i$. Then
\[
|u_j-u_k|\le\sum_{i=j}^{k-1}|u_i-u_{i+1}|\le\sum_{i=j}^{k-1}2^{-i}=2^{1-j}
\]
in $\cP\setminus F_j$ for every $k>j$. Hence
$(u_i)$ converges pointwise in $\cP\setminus \cap_{j=1}^\infty F_j$
and the convergence is uniform in $\cP\setminus F_j$.
By continuity, $2^{i}|u_i-u_{i+1}|\in\cA_{M^{s,X}}(E_i)$ and so
\[
C_{M^{s,X}}(E_i)\le 2^i\|u_i-u_{i+1}\|_{M^{s,X}}<2^{-i}
\]
for every $i\in\n$.
Hence, by Lemma \ref{r-subadd},
\[
C_{M^{s,X}}(F_j)^\rho \le 8
\sum_{i=j}^\infty C_{M^{s,X}}(E_i)^\rho \le C2^{-j\rho},
\]
which implies that $C_{M^{s,X}}(\cap_{j=1}^\infty F_j)=0$. It follows
that the function  $u^*=\limsup_{i\to\infty} u_i$ is quasicontinuous.
Moreover, $u^*=\lim_{i\to\infty} u_i=u$ almost everywhere.

Suppose then that $u\in \dot M^{s,X}$. Let $x\in\cP$. For $k\in\n$, let $\ph_k$ be a Lipschitz
function of bounded support such that $\ph_k=1$ in $B(x,k)$. Then, by Lemma \ref{gradient for product}, $u_k=u\ph_k\in M^{s,X}$. By the first part of the proof, there exists quasicontinuous $u_k^*$ such that $u_k^*=u_k$ almost everywhere.
Since $u_{k+1}^*=u=u_k^*$ almost everywhere in $B(x,k)$, Lemma \ref{Kil}
implies that there exists $E_k$ with $C_{M^{s,X}}(E_k)=0$ such that $u_{k+1}^*=u_k^*$ in $B(x,k)\setminus E_k$. It follows that the limit
$\lim_{k\to\infty} u_k^*$ exists in $\cP\setminus E$, where, by Lemma \ref{r-subadd}, $E=\cup_{k=1}^\infty E_k$ is of $M^{s,X}$-capacity zero. 
Define $u^*=\limsup_{k\to\infty} u_k^*$. Then, clearly $u^*=u$ almost everywhere. Let $\eps>0$. For every $k\in\n$, there exists $U_k$ with $C_{M^{s,X}}(U_k)<2^{-k}\eps$ such that ${u_k^*}_{|\cP\setminus U_k}$ is continuous. It follows that $u^*_{|\cP\setminus \cup_{k=1}^\infty U_k \cup E}$ is continuous
and, by Lemma \ref{r-subadd}, $C_{M^{s,X}}(\cup_{k=1}^\infty U_k\cup E)<\eps$.
\end{proof}

The following lemma gives a useful characterization of the capacity in terms of quasicontinuous functions.
The proof of the lemma is essentially same as the proof of \cite[Theorem 3.4]{KKM}.
For $E\subset \cP$, denote
\[
\begin{split}
\mathcal{QA}_{M^{s,X}}(E)=\{&u\in M^{s,X}: u \text{ is quasicontinuous}\\ 
&\text{and }u\ge 1 \text{ quasieverywhere in } E\}
\end{split}
\]
and
\[
\widetilde C_{M^{s,X}}(E)=\inf_{u\in \mathcal{QA}_{M^{s,X}}(E)}\|u\|_{M^{s,X}}.
\]
\begin{lemma}\label{cap lemma}
Suppose that $X$ has absolutely continuous quasinorm. Then 
\[ 
\widetilde C_{M^{s,X}}(E)\le C_{M^{s,X}}(E)\le c_\Delta\,\widetilde C_{M^{s,X}}(E)
\]
for every $E\subset \cP$.
\end{lemma}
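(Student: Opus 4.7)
I would prove the two inequalities separately.

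For the easier direction $\widetilde C_{M^{s,X}}(E)\le C_{M^{s,X}}(E)$, given any $u\in \cA_{M^{s,X}}(E)$ with $u\ge 1$ on an open neighborhood $U$ of $E$, I would apply Lemma \ref{quasicont rep} to obtain a quasicontinuous $u^*$ with $u^*=u$ a.e. The functions $\min\{u^*,1\}$ and the constant $1$ are quasicontinuous and agree a.e.\ on the open set $U$, so Lemma \ref{Kil} gives $u^*\ge 1$ quasieverywhere on $U$, hence on $E$. Since $u^*=u$ a.e.\ we have $\|u^*\|_{M^{s,X}}=\|u\|_{M^{s,X}}$ (any $s$-gradient of $u$ is an $s$-gradient of $u^*$), so $u^*\in\mathcal{QA}_{M^{s,X}}(E)$ and the inequality follows by taking the infimum.

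For the harder direction $C_{M^{s,X}}(E)\le c_\Delta\,\widetilde C_{M^{s,X}}(E)$, fix $\eps>0$ and choose quasicontinuous $u\in\mathcal{QA}_{M^{s,X}}(E)$ with $\|u\|_{M^{s,X}}<\widetilde C_{M^{s,X}}(E)+\eps$. Replacing $u$ by $|u|$ I may assume $u\ge 0$. Two things can obstruct $u$ from being admissible in the $C$-sense: $u$ is only continuous outside some exceptional open set, and $u\ge 1$ only quasieverywhere rather than on a neighborhood. To handle these, pick an open set $V_1$ with $C_{M^{s,X}}(V_1)<\eps$ outside which $u$ is continuous, and choose $w_1\in\cA_{M^{s,X}}(V_1)$ with $0\le w_1\le 1$ and $\|w_1\|_{M^{s,X}}<\eps$; since $F:=E\cap\{u<1\}$ has $C_{M^{s,X}}$-capacity zero, also pick $w_2\in\cA_{M^{s,X}}(F)$ with $0\le w_2\le 1$ and $\|w_2\|_{M^{s,X}}<\eps$.

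Fix a small $\eta>0$ and set $v=\max\{u/(1-\eta),w_1,w_2\}$. I would then verify that $v\ge 1$ on an open neighborhood of $E$, formed as the union of (i) an open neighborhood of $V_1$ on which $w_1\ge 1$, (ii) an open neighborhood of $F$ on which $w_2\ge 1$, and (iii) an open set $W\subset\cP$ with $W\cap(\cP\setminus V_1)=\{x\in\cP\setminus V_1:u(x)>1-\eta\}$, which exists because $u|_{\cP\setminus V_1}$ is continuous. Any $x\in E$ belongs either to $F$ (case (ii)), to $V_1\setminus F$ (case (i)), or to $(E\setminus F)\setminus V_1$, where $u(x)\ge 1>1-\eta$ puts it in $W$ (case (iii)); on each piece the corresponding summand makes $v\ge 1$, so $v\in\cA_{M^{s,X}}(E)$.

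Finally I would estimate $\|v\|_{M^{s,X}}$. Since $0\le v\le u/(1-\eta)+w_1+w_2$ pointwise and Lemma \ref{max lemma} gives that $\max\{g/(1-\eta),g_{w_1},g_{w_2}\}$ lies in $\cD^s(v)$ whenever $g,g_{w_1},g_{w_2}$ lie in the respective gradient classes, two applications of the quasi-triangle inequality yield
\[
\|v\|_{M^{s,X}}\le \frac{c_\Delta}{1-\eta}\|u\|_{M^{s,X}}+c_\Delta^2\bigl(\|w_1\|_{M^{s,X}}+\|w_2\|_{M^{s,X}}\bigr)<\frac{c_\Delta}{1-\eta}\bigl(\widetilde C_{M^{s,X}}(E)+\eps\bigr)+2c_\Delta^2\eps.
\]
Letting $\eta\to 0$ and then $\eps\to 0$ gives the claim. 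I expect the main obstacle to be step three: carefully producing the open neighborhood of $E$ on which $v\ge 1$, since the set $\{u>1-\eta\}$ is not open in $\cP$ and one must lift the relative openness in $\cP\setminus V_1$ to a true open set and then stitch the three pieces together so that $E$ is covered while keeping $v\ge 1$ everywhere on the union.
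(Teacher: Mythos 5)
Your proof is correct and follows essentially the same route as the paper: the first inequality via a quasicontinuous representative and Lemma \ref{Kil}, the second by lifting the relative openness of $\{u>1-\eta\}$ off a small-capacity open set and patching with an auxiliary admissible function. The only cosmetic differences are that you use a maximum of three functions (keeping the discontinuity set $V_1$ and the capacity-zero set $F$ separate) where the paper absorbs $F$ into a single open set $V$ and adds two functions, which changes nothing essential.
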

\begin{proof}
To prove the first inequality, let $u\in\cA_{M^{s,X}}(E)$ and let $u^*$ be a quasicontinuous representative of $u$. Since $u\ge 1$ in some open set $U$ containing $E$ and $u^*=u$ almost everywhere, it follows that $\min\{0,u^*-1\}= 0$ almost everywhere in $U$. Since $\min\{0,u^*-1\}$ is quasicontinuous, the
equality actually holds quasieverywhere in $U$. Hence $u^*\ge 1$ quasieverywhere in $U$, which implies that $u^*\in\mathcal{QA}_{M^{s,X}}(E)$.

For the second inequality, let $v\in\mathcal{QA}_{M^{s,X}}(E)$. By truncation, we may assume that $0\le u\le 1$.
 Fix $0 < \eps < 1$, and choose an open set $V$ with $C_{M^{s,X}}(V) < \eps$ so that
$v = 1$ on $E \setminus V $ and that $v$ is continuous in $\cP\setminus V$.
By continuity, there is an open set
$U \subset \cP$ such that 
\[
\{ x \in \cP : v(x) > 1-\eps \}\setminus V = U \setminus V.
\]
Clearly, $E \setminus V \subset U \setminus V$.
Choose $u \in \cA_{M^{s,X}}(V)$ such that $\|u\|_{M^{s,X}} < \eps$
and that $0 \le u \le 1$.
Define $w=v/(1-\eps)+u$. Then $w\ge 1$ on $(U\setminus V)\cup V=U\cup V$ which is an open neighbourhood of $E$. Hence $w\in \cA_{M^{s,X}}(E)$ and so
\[
C_{M^{s,X}}(E)\le \|w\|_{M^{s,X}}\le c_\Delta\Big(\frac{1}{1-\eps}\|v\|_{M^{s,X}}+\|u\|_{M^{s,X}}\Big)
\le c_\Delta\Big(\frac{1}{1-\eps}\|v\|_{M^{s,X}}+\eps\Big).
\]
Since $\eps>0$ and $v\in\mathcal{QA}_{M^{s,X}}(E)$ are arbitrary, the desired inequality $C_{M^{s,X}}(E)\le c_\Delta\,\widetilde C_{M^{s,X}}(E)$ follows.
\end{proof}




\section{Generalized Lebesgue points}\label{sec: qc}
In this section, we prove the first main result of the paper, Theorem \ref{main thm}. 
The main ingredient of the proof of is a capacitary weak type estimate, Theorem \ref{cap weak type} below. 


\begin{lemma}\label{gradient for discrete median maximal function} 
Let $0<s\le 1$, $0<\gamma\le 1/2$ and $0<R<\infty$.
Let $u\in L^0$ and $g\in\cD^s(u)\cap L^0$. 
Then there exists a constant $C\ge 1$ such that $C\M^{\gamma/C}_{3R}g$ is an $s$-gradient of $\M^{\gamma,*}_R u$.

\end{lemma}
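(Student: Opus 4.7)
The plan is to establish that for each rational $q \in (0,R)$, the discrete median convolution $|u|_q^\gamma$ admits an $s$-gradient bounded pointwise by $h := C\M^{\gamma/C}_{3R}g$, with constants $C = C(c_d,s)$ independent of $q$. Since $\{|u|_q^\gamma : q \in \mathbb{Q}\cap(0,R)\}$ is countable and $\M^{\gamma,*}_R u = \sup_q |u|_q^\gamma$, Lemma \ref{sup lemma} then yields $h \in \cD^s(\M^{\gamma,*}_R u)$. Because $g \in \cD^s(u)$ implies $g \in \cD^s(|u|)$, I work with $|u|$ in place of $u$; to lighten notation I write $u$ for $|u|$ below.

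The single-point estimate is the following. Fix $x$ in the full-measure set where $g(x) \le \M^{\gamma/C}_{3R}g(x)$, which is valid by Lemma \ref{median lemma}(i). For any $i$ with $x \in 2B_i$, $B_i \subset B(x,3q)$. Properties (d) and (f) of Lemma \ref{median lemma} (using $\gamma \le 1/2$) give
\[
|m_u^\gamma(B_i) - u(x)| \le m^\gamma_{|u - u(x)|}(B_i).
\]
The gradient inequality together with $d(z,x) \le 3q$ for $z \in B_i$ and properties (b), (d), (e) bound the right-hand side by $(3q)^s(m_g^\gamma(B_i) + g(x))$. Applying (c) with the doubling bound $\mu(B(x,3q))/\mu(B_i) \le C$, and using $3q < 3R$ together with $g(x) \le \M^{\gamma/C}_{3R}g(x)$, one obtains
\[
|m_u^\gamma(B_i) - u(x)| \le Cq^s\,\M^{\gamma/C}_{3R}g(x).
\]

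To estimate $u_q^\gamma(x) - u_q^\gamma(y)$ I distinguish two cases. If $d(x,y) \ge q$, the triangle inequality
\[
|u_q^\gamma(x) - u_q^\gamma(y)| \le |u_q^\gamma(x) - u(x)| + |u(x) - u(y)| + |u(y) - u_q^\gamma(y)|,
\]
combined with the single-point estimate and the gradient inequality for $u$, gives the desired bound since $q^s \le d(x,y)^s$. If $d(x,y) < q$, I use $\sum_i \varphi_i \equiv 1$ to write
\[
u_q^\gamma(x) - u_q^\gamma(y) = \sum_{i \in I(x,y)} (m_u^\gamma(B_i) - u(x))(\varphi_i(x) - \varphi_i(y)),
\]
where $I(x,y) = \{i : \{x,y\} \cap 2B_i \ne \emptyset\}$ has cardinality bounded by Lemma \ref{partition of unity}(b). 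For $i$ with $x \in 2B_i$ the single-point estimate applies directly to yield $|m_u^\gamma(B_i) - u(x)| \le Cq^s h(x)$. For $i$ with $y \in 2B_i$ but $x \notin 2B_i$, the triangle inequality $|m_u^\gamma(B_i) - u(x)| \le |m_u^\gamma(B_i) - u(y)| + |u(y) - u(x)|$ together with $B_i \subset B(y,3q)$ gives a bound of $Cq^s h(y) + d(x,y)^s(g(x) + g(y))$. Finally, $|\varphi_i(x) - \varphi_i(y)| \le \min\{1,(C/q)d(x,y)\} \le C(d(x,y)/q)^s$, where the assumption $s \le 1$ is essential; multiplying through, the factor $q^s$ from the median estimate cancels against $1/q^s$ to produce $d(x,y)^s$, the residual factor $(d(x,y)/q)^s \le 1$ is harmless, and the bounded overlap of $I(x,y)$ completes the case.

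The main technical point is the second case: in order to use the Lipschitz bound on $\varphi_i$, one must subtract a constant inside $u_q^\gamma$, but the natural choice $u(x)$ is not automatically close to $m_u^\gamma(B_i)$ when $B_i$ is attached to $y$ rather than $x$. The remedy is to detour through $u(y)$ and apply the single-point estimate at $y$, which explains both why the target maximal function operates at scale $3R$ (to cover $B_i \subset B(y,3q) \subset B(y,3R)$ as well as the $x$-side analogue) and why the median parameter must drop to $\gamma/C$ (to pass from $B_i$ to an enlargement of comparable measure via Lemma \ref{median lemma}(c)).
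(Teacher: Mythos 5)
Your proof is correct. It rests on exactly the same key estimate as the paper's, namely the single-point bound
\[
|m_{|u|}^\gamma(B_i)-|u(x)||\le m^\gamma_{||u|-|u(x)||}(B_i)\le Cq^s\bigl(m_g^{\gamma/C}(B(x,3q))+g(x)\bigr)\le Cq^s\,\M^{\gamma/C}_{3R}g(x)
\]
for a.e.\ $x\in 2B_i$, but it converts that estimate into an $s$-gradient by a genuinely different mechanism. The paper writes $u_r^\gamma=u+\sum_i(m_u^\gamma(B_i)-u)\ph_i$ and invokes Lemma \ref{gradient for product} to produce an $s$-gradient of each summand $(m_u^\gamma(B_i)-u)\ph_i$, then combines these via the bounded overlap of the balls $2B_i$; you instead bypass the product lemma and verify the defining inequality \eqref{eq: gradient} for $|u|_q^\gamma$ directly, splitting into the cases $d(x,y)\ge q$ (telescoping through $|u|$ itself, using $q^s\le d(x,y)^s$) and $d(x,y)<q$ (subtracting the constant $|u(x)|$ via $\sum_i\ph_i\equiv1$, detouring through $|u(y)|$ for the balls attached to $y$, and trading the Lipschitz bound $|\ph_i(x)-\ph_i(y)|\le\min\{1,Cd(x,y)/q\}\le C(d(x,y)/q)^s$ against the factor $q^s$). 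Your route is more self-contained and makes explicit where $s\le1$ and the uniformity of the constant in $q$ enter, at the cost of the two-case bookkeeping; the paper's is shorter because Lemma \ref{gradient for product} is already available. Both arguments then pass to the supremum over rational $q<R$ by Lemma \ref{sup lemma} in the same way.
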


\begin{proof} 
Let $r>0$. By the definition of the discrete $\gamma$-median convolution $u^\gamma_r$ and by the properties of the functions $\ph_i$,
\[
u^\gamma_r=u+\sum_{i\in I}(m^\gamma_u(B_i)-u)\ph_i.
\]
By Lemma \ref{gradient for product}, function
\[
(g+Cr^{-s}|u-m^\gamma_u(B_i)|)\ch{2B_i}
\]
is an $s$-gradient of function $(m^\gamma_u(B_i)-u)\varphi_i$ for each $i$.

Let $x\in 2B_i\setminus E$, where $E$ is the exceptional set for \eqref{eq: gradient}. 
Using Lemma \ref{median lemma} and the facts that $B_i\subset B(x,3r)$ and $\mu(B(x,3r))\le C\mu(B_i)$, we obtain
\[
\begin{split}
|u(x)-m_u^{\gamma}(B_i)|&\le m_{|u-u(x)|}^{\gamma}(B_i)
\le Cr^s\big( m_{g}^{\gamma}(B_i)+g(x)\big)\\
&\le Cr^s\big( m_{g}^{\gamma/C}(B(x,3r))+g(x)\big)\\
&\le Cr^s(g(x)+\M^{\gamma/C}_{3r} g(x)).
\end{split}
\]
Since $g(x)\le\M^{\gamma/C}_{3r}g(x)$ for almost every $x$ and since the balls $2B_i$ have bounded overlap, it follows that
\[
C\M^{\gamma/C}_{3r} g\in \cD^s(u^\gamma_r),
\]
for every $r>0$. Consequently, by Lemma \ref{sup lemma},
\[
C\M^{\gamma/C}_{3R} g\in \cD^s(\M^{\gamma,*}_R u)
\]
as desired.
\end{proof}

\begin{theorem}\label{cap weak type}
Suppose that the assumptions of Theorem \ref{main thm} are in force.
Then, for every ball $B=B(x_0,r)$ and for every $0<\gamma< 1$, there exists a constant
$C$ such that
\[
C_{M^{s,X}}\big(\{x\in B: \M^{\gamma,*}_{1/3} u(x) >\lambda\}\big)
\le C\lambda^{-1}\|u\|_{M^{s,X}}
\]
for every $u\in M^{s,X}$ and $\lambda>0$.
\end{theorem}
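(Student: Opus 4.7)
The plan is to exhibit, for the given $\lambda>0$, an explicit admissible function for the capacity of $\{x\in B:\M^{\gamma,*}_{1/3}u(x)>\lambda\}$, built from $\M^{\gamma,*}_{1/3}u$ itself multiplied by a Lipschitz cutoff localised to a fixed dilate of $B$. Since the discrete median maximal function is lower semicontinuous (as a supremum of continuous functions), the set $U=\{x\in\cP:\M^{\gamma,*}_{1/3}u(x)>\lambda\}$ is open, and hence so is $U\cap B$. By Lemma \ref{median lemma}(a), $\gamma\mapsto\M^{\gamma,*}_{R}u$ is nonincreasing, so $\{\M^{\gamma,*}_{1/3}u>\lambda\}\subset\{\M^{\gamma',*}_{1/3}u>\lambda\}$ whenever $\gamma'\le\gamma$, and the capacity of the smaller set is dominated by that of the larger; thus it suffices to treat $\gamma\le 1/2$, which is the range where Lemma \ref{gradient for discrete median maximal function} applies.

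With $B=B(x_0,r)$, I would pick a cutoff $\varphi:\cP\to[0,1]$ with $\varphi=1$ on $B$, $\supp\varphi\subset 2B$, and $\Lip\varphi\le C/r$ (e.g.\ $\varphi(x)=\max\{0,1-r^{-1}\dist(x,B)\}$), and set
\[
w=\lambda^{-1}\varphi\cdot\M^{\gamma,*}_{1/3}u.
\]
On the open set $U\cap B$ we have $\varphi=1$ and $\M^{\gamma,*}_{1/3}u>\lambda$, so $w>1$ there, making $w$ an admissible function for $U\cap B$ once we verify $w\in M^{s,X}$.

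The norm estimates go as follows. For the $X$-part, Lemma \ref{comparability} gives a constant $C=C(c_d)$ with $\M^{\gamma,*}_{1/3}u\le C\,\M^{\gamma/C}_{1}u$, so by hypothesis \eqref{M gamma bd} applied on the ball $2B$,
\[
\|w\|_X\le\lambda^{-1}\|\chi_{2B}\M^{\gamma,*}_{1/3}u\|_X\le C\lambda^{-1}\|\chi_{2B}\M^{\gamma/C}_{1}u\|_X\le C\lambda^{-1}\|u\|_X.
\]
For the Haj\l asz part, given $g\in\cD^s(u)\cap X$, Lemma \ref{gradient for discrete median maximal function} (with $R=1/3$, so $3R=1$) provides a constant $C$ such that $\tilde g:=C\M^{\gamma/C}_{1}g\in\cD^s(\M^{\gamma,*}_{1/3}u)$. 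Applying Lemma \ref{gradient for product} to $v:=\M^{\gamma,*}_{1/3}u$ with the cutoff $\varphi$ (so $S=2B$, $L\le C/r$, $\|\varphi\|_\infty\le 1$), we obtain that
\[
h:=\bigl(\tilde g+C\,r^{-s}v\bigr)\chi_{2B}
\]
lies in $\cD^s(v\varphi)$, whence $\lambda^{-1}h\in\cD^s(w)$. Using \eqref{M gamma bd} once more together with Lemma \ref{comparability},
\[
\|\lambda^{-1}h\|_X\le C\lambda^{-1}\bigl(\|\chi_{2B}\M^{\gamma/C}_{1}g\|_X+r^{-s}\|\chi_{2B}\M^{\gamma,*}_{1/3}u\|_X\bigr)\le C_B\,\lambda^{-1}\bigl(\|g\|_X+\|u\|_X\bigr),
\]
where $C_B$ depends on $r,\gamma,c_d$ and the constants in \eqref{M gamma bd}. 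Taking the infimum over $g\in\cD^s(u)$ and combining the two bounds yields $\|w\|_{M^{s,X}}\le C_B\lambda^{-1}\|u\|_{M^{s,X}}$, and hence $C_{M^{s,X}}(U\cap B)\le\|w\|_{M^{s,X}}\le C_B\lambda^{-1}\|u\|_{M^{s,X}}$.

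The main technical point is that Lemma \ref{gradient for discrete median maximal function} only provides a gradient for $\gamma\le 1/2$; this forced the preliminary reduction by monotonicity. A secondary nuisance is the factor $r^{-s}$ produced by Lemma \ref{gradient for product} when the cutoff is applied, but this is harmless because the theorem allows the constant to depend on the ball $B$.
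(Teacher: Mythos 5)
Your proof is correct and follows essentially the same route as the paper's: reduce to $\gamma\le 1/2$ by monotonicity, localize with a Lipschitz cutoff, bound the $X$-part via Lemma \ref{comparability} and hypothesis \eqref{M gamma bd}, produce an $s$-gradient from Lemma \ref{gradient for discrete median maximal function} combined with Lemma \ref{gradient for product}, and use lower semicontinuity of the discrete median maximal function to get an admissible function. The only (harmless) difference is that you multiply the cutoff onto $\M^{\gamma,*}_{1/3}u$ after applying the maximal operator, whereas the paper applies the maximal operator to $u\varphi$ and observes that the two agree on $B$; both localizations yield the same estimate.
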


\begin{proof} Since $\M^{\gamma,*}_{1/3} u\le \M^{\gamma',*}_{1/3} u$ when $\gamma'\le\gamma$, it suffices to prove the claim for $0<\gamma\le 1/2$.
Let $u\in M^{s,X}$ and let $\varphi:\cP\to [0,1]$ be a Lipschitz function such that $\varphi=1$ in $B(x_0,r+4/3)$ and $\ph=0$ outside $B(x_0,r+2)$. Then
$\M^{\gamma,*}_{1/3} u=\M^{\gamma,*}_{1/3}(u\ph)$ in $B$ and so 
\[
\{x\in B: \M^{\gamma,*}_{1/3} u(x) >\lambda\}\subset \{x\in \cP: \M^{\gamma,*}_{1/3} (u\ph)(x) >\lambda\}.
\]
By \eqref{comparability for median maximal functions} and \eqref{M gamma bd},
 \[
 \begin{split}
     \|\M^{\gamma,*}_{1/3} (u\ph)\|_X\le C\|\M^{\gamma/C}_{1} (u\ph)\|_X\le C\|(\M^{\gamma/C}_{1}u)\chi_{B(x_0,r+3)}\|_X\le C\|u\|_X
 \end{split}
 \]
and, by Lemma \ref{gradient for discrete median maximal function},
Lemma \ref{gradient for product} and \eqref{M gamma bd},
 \[
 \begin{split}
     \|\M^{\gamma,*}_{1/3} (u\ph)\|_{\dot M^{s,X}}
     &\le C\inf_{g\in\cD^s(u\ph)}\|\M^{\gamma/C}_{1}g\|_X
     \le C\inf_{g\in\cD^s(u)}\|\M^{\gamma/C}_{1}((u+g)\chi_{B(x_0,r+2)})\|_X\\
     &\le C\inf_{g\in\cD^s(u)}\|(\M^{\gamma/C}_{1}(u+g))\chi_{B(x_0,r+3)})\|_X\\
     &\le C\inf_{g\in\cD^s(u)}\|u+g\|_X\le C\|u\|_{M^{s,X}}.
 \end{split}
 \]
 Hence, $\M^{\gamma,*}_{1/3} (u\ph)\in M^{s,
 X}$. Since $\M^{\gamma,*}_{1/3} (u\ph)$ is lower semicontinuous, $\lambda^{-1}\M^{\gamma,*}_{1/3} (u\ph)\in\cA_{M^{s,X}}(\{x\in\cP: M^{\gamma,*}_{1/3} (u\ph)>\lambda\})$. Thus,
 \[
 \begin{split}
     C_{M^{s,X}}(\{x\in\cP: \M^{\gamma,*}_{1/3} (u\ph)>\lambda\})\le \lambda^{-1}\|\M^{\gamma,*}_{1/3} (u\ph)\|_{M^{s,X}}
     \le C\lambda^{-1}\|u\|_{M^{s,X}}
 \end{split}
 \]
 and the claim follows.
\end{proof}

\begin{lemma}\label{cap limsup implies gen leb points}
Suppose that $X$ has absolutely continuous quasinorm and that $B$ is a ball such that, for every $0<\gamma<1$ and $\lambda>0,$
\begin{equation}\label{cap limsup}
   \lim_{k\to \infty} C_{M^{s,X}}(\{x\in B: \limsup_{r\to 0}m^\gamma_{|u_k|}(B(x,r))>\lambda\})=0.
\end{equation}
whenever $\lim_{k\to\infty}\|u_k\|_{M^{s,X}}=0$. Then, for every quasicontinuous $u\in M^{s,X}$, quasievery point in $B$ is a generalized Lebesgue point of $u$.

\end{lemma}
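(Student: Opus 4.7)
The plan is to approximate $u$ by continuous functions and reduce the conclusion to the hypothesized capacitary estimate, using quasicontinuity to control the residual error. First I will invoke Lemma~\ref{cont lemma} to pick continuous $u_k\in M^{s,X}$ with $\|u-u_k\|_{M^{s,X}}\to 0$, set $v_k:=u-u_k$, and observe that $v_k$ is quasicontinuous as a difference of quasicontinuous functions (the two exceptional sets are combined via \eqref{eq: r-subadd}), with $\|v_k\|_{M^{s,X}}\to 0$.

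For $x\in B$, the pointwise estimate $|u(y)-u(x)|\le |u_k(y)-u_k(x)|+|v_k(y)-v_k(x)|$ together with Lemma~\ref{median lemma}(b),(g) yields
\[
m^\gamma_{|u-u(x)|}(B(x,r))\le m^{\gamma/2}_{|u_k-u_k(x)|}(B(x,r))+m^{\gamma/2}_{|v_k-v_k(x)|}(B(x,r)).
\]
The first term tends to $0$ as $r\to 0$ by the continuity of $u_k$ and Lemma~\ref{median lemma}(h). For the second I will further split $|v_k(y)-v_k(x)|\le |v_k(y)|+|v_k(x)|$, apply (g) once more, and use that the $\gamma$-median of a constant equals that constant, obtaining
\[
\limsup_{r\to 0}m^\gamma_{|u-u(x)|}(B(x,r))\le \limsup_{r\to 0}m^{\gamma/4}_{|v_k|}(B(x,r))+|v_k(x)|.
\]
Hence, for fixed $0<\gamma<1$ and $\lambda>0$, the set $\{x\in B:\limsup_{r\to 0}m^\gamma_{|u-u(x)|}(B(x,r))>\lambda\}$ is contained in the union of
\[
A_k:=\{x\in B:\limsup_{r\to 0}m^{\gamma/4}_{|v_k|}(B(x,r))>\lambda/2\}\ \ \text{and}\ \ D_k:=\{x\in B:|v_k(x)|>\lambda/2\}.
\]

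Feeding the null sequence $(v_k)$ into the hypothesis at parameter $\gamma/4$ gives $C_{M^{s,X}}(A_k)\to 0$. For $D_k$ I will use that $2|v_k|/\lambda$ is quasicontinuous and $\ge 1$ on $D_k$, so Lemma~\ref{cap lemma} yields $C_{M^{s,X}}(D_k)\le (2c_\Delta/\lambda)\|v_k\|_{M^{s,X}}\to 0$. Letting $k\to\infty$ in \eqref{eq: r-subadd} then forces $C_{M^{s,X}}(\{x\in B:\limsup_{r\to 0}m^\gamma_{|u-u(x)|}>\lambda\})=0$. A countable union over $\lambda=1/m$ and $\gamma=1/n$, combined with the monotonicity in Lemma~\ref{median lemma}(a), produces a set $E\subset B$ of $M^{s,X}$-capacity zero outside of which the first limit in \eqref{claim} holds for every $\gamma\in(0,1)$. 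The second limit then follows from Lemma~\ref{median lemma}(d),(f): writing $m^\gamma_u(B(x,r))-u(x)=m^\gamma_{u-u(x)}(B(x,r))$ and bounding the latter in absolute value by $m^{\min\{\gamma,1-\gamma\}}_{|u-u(x)|}(B(x,r))$, which tends to $0$ by what was just shown.

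The main technical irritation I foresee is the bookkeeping of the successive halvings of $\gamma$ when iterating (g) and making sure that $v_k$ is genuinely quasicontinuous so that $D_k$ can be controlled via Lemma~\ref{cap lemma}; neither, however, is a serious obstacle once the approximation is set up.
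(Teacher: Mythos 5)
Your proposal is correct and follows essentially the same route as the paper's proof: approximate $u$ by continuous functions from Lemma~\ref{cont lemma}, split the median of $|u-u(x)|$ via Lemma~\ref{median lemma}, control one piece by the hypothesis and the other by Lemma~\ref{cap lemma} applied to the quasicontinuous remainder, then take a countable union over $\gamma$ and $\lambda$. The only cosmetic difference is that the paper uses property (d) (adding the constant $|w_k(x)|$) to keep the parameter at $\gamma/2$, whereas you apply (g) once more and land at $\gamma/4$; this is immaterial since the hypothesis holds for all $\gamma$.
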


\begin{proof} 
By Lemma \ref{cont lemma}, continuous
functions are dense in $M^{s,X}$. Let $u\in M^{s,X}$ be quasicontinuous and
let $v_k\in M^{s,X}$, $k=1,2,\dots,$ be continuous such that \[\|u-v_k\|_{M^{s,X}}\to 0\] as $k\to\infty$. Denote $w_k=u-v_k$. 
Fix $0<\gamma<1$ and $\lambda>0$. 
By Lemma \ref{median lemma},
\[
\begin{split}
\limsup_{r\to 0}m^{\gamma}_{|u-u(x)|}(B(x,r))&\le \limsup_{r\to 0}m^{\gamma/2}_{|v_k-v_k(x)|}(B(x,r))+\limsup_{r\to 0}m^{\gamma/2}_{|w_k-w_k(x)|}(B(x,r))\\
&\le \limsup_{r\to 0}m^{\gamma/2}_{|w_k|}(B(x,r))+|w_k(x)|.
\end{split}
\]
Hence, by Lemma \ref{r-subadd}, 
\[
\begin{split}
&C_{M^{s,X}}(\{x\in B:\limsup_{r\to 0}m^{\gamma}_{|u-u(x)|}(B(x,r))>\lambda\})^\rho
\\&\le 8\,C_{M^{s,X}}(\{x\in B:\limsup_{r\to 0}m^{\gamma/2}_{|w_k|}(B(x,r))>\lambda/2\})^\rho\\
&+8\,C_{M^{s,X}}(\{x\in B:|w_k(x)|>\lambda/2\})^\rho.
\end{split}
\]
By assumption, 
\[
\begin{split}
C_{M^{s,X}}(\{x\in B:\limsup_{r\to 0}m^{\gamma/2}_{|w_k|}(B(x,r))>\lambda/2\})\to 0
\end{split}
\]
as $k\to\infty$.
Since $|w_k|$ is quasicontinuous, Lemma \ref{cap lemma} gives
\[
\begin{split}
C_{M^{s,X}}(\{x\in B:|w_k(x)|>\lambda/2\}) &\le C\widetilde C_{M^{s,X}}(\{x\in B:|w_k(x)|>\lambda/2\})\\
&\le C2\lambda^{-1}\|w_k\|_{M^{s,X}}\to 0
\end{split}
\]
as $k\to\infty$.
It follows that
\[
C_{M^{s,X}}(\{x\in B:\limsup_{r\to 0}m^{\gamma}_{|u-u(x)|}(B(x,r))>\lambda\})=0
\]
for every $0<\gamma<1$ and $\lambda>0$. 
Denote
\[
\begin{split}
E=\{x\in B:\limsup_{r\to 0}m^{\gamma}_{|u-u(x)|}(B(x,r))>0 \text{ for some $0<\gamma<1$}\}.
\end{split}
\]
Then
\[
\begin{split}
E=\bigcup_{n,m\ge 2}\{x\in B:\limsup_{r\to 0}m^{1/m}_{|u-u(x)|}(B(x,r))>1/n\}
\end{split}
\]
and so, by
Lemma \ref{r-subadd}, $C_{M^{s,X}}(E)=0$.
Since, by Lemma \ref{median lemma},
\[
| m^\gamma_{u}(B(x,r))-u(x)|
=| m^\gamma_{u-u(x)}(B(x,r))|
\le m^{\min\{\gamma,1-\gamma\}}_{|u-u(x)|}(B(x,r)),
\]
the claim follows.
\end{proof}

\begin{proof}[Proof of Theorem \ref{main thm}]
Let $u\in \dot M^{s,X}$ be quasicontinuous. Fix $x\in\cP$ and, for $k\in\n$, let $\ph_k:\cP\to [0,1]$ be a Lipschitz
function of bounded support such that $\ph_k=1$ in $B(x,k)$. Then, by Lemma \ref{gradient for product}, $u_k=u\ph_k\in M^{s,X}$.
By Theorem \ref{cap weak type}, Lemma \ref{comparability} and Lemma \ref{cap limsup implies gen leb points}, for every $k$, quasievery point is a generalized Lebesgue point of $u_k$. Hence, for every $k\in\n$,
quasievery point in $B(x,k)$ is a generalized Lebesgue point of $u$.
Thus, by Lemma \ref{r-subadd}, quasievery point is a generalized Lebesgue point of $u$.
\end{proof}

\section{Lebesgue points}\label{sec: integral averages}
In this section we prove Theorem \ref{main thm 2}.
\begin{lemma}\label{gradient for discrete maximal function} 
Suppose that $\mu$ satisfies \eqref{Q}. Let $0<s\le 1$, $R>0$, $u\in L^0$ and $g\in\cD^s(u)\cap L^q_\text{loc}$ where $q=Q/(Q+s)$. 
Then there exists a constant $C$ such that $C(\M_{6R }g^q)^{1/q}$ is an $s$-gradient of  $\M^*_R u$.
\end{lemma}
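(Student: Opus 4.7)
Plan: The proof would follow the template of Lemma \ref{gradient for discrete median maximal function} almost verbatim, with integral averages $u_{B_i}$ replacing the $\gamma$-medians $m^\gamma_u(B_i)$. Fix $r>0$; using $\sum_i\varphi_i=1$, write
\[
u_r = u + \sum_{i\in I}(u_{B_i}-u)\varphi_i,
\]
and apply Lemma \ref{gradient for product} to the Lipschitz cutoffs $\varphi_i$ (with $\|\varphi_i\|_\infty\le 1$ and Lipschitz constant $C/r$) to conclude that $(g+Cr^{-s}|u-u_{B_i}|)\chi_{2B_i}$ is an $s$-gradient of $(u_{B_i}-u)\varphi_i$ for each $i$.

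The crucial pointwise ingredient to establish is: for a.e.\ $x\in 2B_i$,
\[
|u(x)-u_{B_i}|\le Cr^{s}\bigl(\M_{6r}g^q(x)\bigr)^{1/q},\qquad q=\tfrac{Q}{Q+s}.
\]
This is the integral-average analog of the pointwise median bound $m^\gamma_{|u-u(x)|}(B_i)\le Cr^s(m^\gamma_g(B_i)+g(x))$ used in the median proof. I would establish it by taking a dyadic chain $B^k=B(x,2^{-k}\cdot 3r)$, $k\ge 0$, which satisfies $B_i\subset B^0$ since $x\in 2B_i$, and combining the telescoping decomposition $u(x)-u_{B^0}=\sum_k(u_{B^{k+1}}-u_{B^k})$ (valid at Lebesgue points of $u$) with the doubling bound $|u_{B_i}-u_{B^0}|\le C\vint{B^0}|u-u_{B^0}|\,d\mu$. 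At the critical exponent $q=Q/(Q+s)$ the passage from the $L^1$-averages of $g$ produced by one application of the Haj\l asz inequality, to the desired $L^q$-averages of $g^q$, is performed by raising each telescope term to the power $q$ and exploiting the subadditivity $(a+b)^q\le a^q+b^q$ for $q\le 1$, yielding the endpoint Sobolev--Poincar\'e estimate for Haj\l asz gradients in $Q$-doubling spaces.

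With the pointwise bound in hand, bounded overlap of $\{2B_i\}$ together with the inequality $g\le(\M_{6r}g^q)^{1/q}$ a.e.\ (from Lebesgue differentiation of $g^q\in L^1_{\mathrm{loc}}$) yields
\[
g+\sum_{i}Cr^{-s}|u-u_{B_i}|\chi_{2B_i}\le C\bigl(\M_{6r}g^q\bigr)^{1/q}
\]
a.e., so $C(\M_{6r}g^q)^{1/q}\in\cD^s(u_r)$ for every $r>0$. Applying the construction to $|u|$ (which shares $g$ as $s$-gradient) and taking the supremum over $r\in\mathbb{Q}\cap(0,R)$ via Lemma \ref{sup lemma} produces $C(\M_{6R}g^q)^{1/q}\in\cD^s(\M^*_R u)$. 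The main obstacle is the critical-exponent pointwise bound: naive application of the Haj\l asz inequality to $\vint{B_i}|u-u(x)|\,d\mu$ delivers only $\M g$ in place of $(\M g^q)^{1/q}$, and it is precisely the telescoping chain argument together with the concavity of $t\mapsto t^q$ that produces the sharper $L^q$-maximal bound needed here.
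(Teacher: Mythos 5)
Your proof is correct and follows essentially the same route as the paper: decompose $u_r=u+\sum_i(u_{B_i}-u)\varphi_i$, apply Lemma \ref{gradient for product} to each term, control $r^{-s}|u(x)-u_{B_i}|$ on $2B_i$ by $C(\M_{6r}g^q(x))^{1/q}$ via a chaining argument together with the endpoint Sobolev--Poincar\'e inequality, and conclude with bounded overlap and Lemma \ref{sup lemma}. The only soft spot is your parenthetical explanation of that Sobolev--Poincar\'e step: the passage from $L^1$-averages of $g$ to $L^q$-averages of $g^q$ with $q=Q/(Q+s)<1$ does \emph{not} follow from the subadditivity of $t\mapsto t^q$ (Jensen's inequality gives $((g^q)_B)^{1/q}\le g_B$, i.e.\ the wrong direction); it is a genuine self-improvement that uses the lower mass bound \eqref{Q} and a truncation argument, which the paper simply imports as \cite[Lemma 2.2]{GKZ}.
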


\begin{proof} 
By the Sobolev--Poincar\'e inequality (\cite[Lemma 2.2]{GKZ}), $u$ is locally integrable and there exists a constant $C$ such that
\begin{equation}\label{S-P}
    \inf_{c\in\re}|u-c|_{B(x,r)}\le Cr^s ((g^q)_{B(x,2r)})^{1/q}
\end{equation}
for every $x\in\cP$ and $r>0$.

Fix $r>0$.
By the definition of the discrete convolution $u_r$ and by the properties of the functions $\ph_i$, 
\[
u_r=u+\sum_{i\in\n}(u_{B_i}-u)\ph_i.
\]
By Lemma \ref{gradient for product}, function
\[
(g+Cr^{-s}|u-u_{B_i}|)\ch{2B_i}
\]
is an $s$-gradient of the function $(u_{B_i}-u)\varphi_i$ for each $i$.
A standard chaining argument and \eqref{S-P} imply that, for almost every $x\in 2B_i$ ,
\[
|u(x)-u_{B_i}|\le C(\M_{6r}g^q(x))^{1/q}.
\]
Since $g(x)\le(\M_{6r}g^q(x))^{1/q}$, for almost every $x$, and since the balls $2B_i$ have bounded overlap, it follows that
\[
C(\M_{6r} g^q)^{1/q}\in \cD^s(u_r).
\]
The claim follows by Lemma \ref{sup lemma}.
 \end{proof}

\begin{lemma}\label{uusi lemma} 
Let $u\in\dot M^{s,X}$ be such that $u=0$ outside $B=B(x_0,r)$ and suppose that there exists $y_0\in \cP$ such that $d(x_0,y_0)=3r$.
Then there exists a constant $C$ such that 
$\|u\|_{M^{s,X}(\cP)}\le C\inf_{g\in \cD^s(u)}\|g\chi_{4B}\|_X$.
\end{lemma}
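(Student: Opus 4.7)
The plan is to localize a given gradient and then bootstrap from an $L^X$-bound to the full $M^{s,X}$-norm. Fix $g\in\cD^s(u)$. Because $u$ vanishes outside $B$, I first claim that
\[
\tilde g=g\chi_{2B}+r^{-s}|u|\in\cD^s(u).
\]
If $x,y\in 2B$ this is immediate from $g\in\cD^s(u)$. If $x\notin 2B$ then $u(x)=0$; if moreover $y\in B$ then $d(x,y)>r$, so $|u(y)|\le r^{-s}d(x,y)^s|u(y)|\le d(x,y)^s\tilde g(y)$, while if $y\notin B$ then $u(y)=0$ and the inequality is trivial. So $\tilde g$ is indeed an $s$-gradient of $u$, and by the quasi-triangle inequality
\[
\|\tilde g\|_X\le c_\Delta\bigl(\|g\chi_{4B}\|_X+r^{-s}\|u\|_X\bigr),
\]
which reduces everything to bounding $\|u\|_X$ by $\|g\chi_{4B}\|_X$.

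For that bound, the assumption on $y_0$ is crucial: since $d(x_0,y_0)=3r$, one checks that $B(y_0,r)\subset 4B\setminus 2B$, so $A:=4B\setminus 2B$ has positive measure and $\|\chi_A\|_X>0$. For a.e.\ $x\in B$ and every $y\in A$ one has $u(y)=0$ and $d(x,y)<5r$, hence
\[
|u(x)|=|u(x)-u(y)|\le (5r)^s\bigl(g(x)+g(y)\bigr).
\]
Taking essinf over $y\in A$ yields
\[
|u(x)|\le 5^s r^s\bigl(g(x)+m\bigr),\qquad m:=\essinf_A g.
\]
Using the lattice property of $X$ together with $m\,\chi_A\le g\chi_A$, so $m\,\|\chi_A\|_X\le\|g\chi_{4B}\|_X$, I get
\[
\|u\|_X=\|u\chi_B\|_X\le C r^s\Bigl(\|g\chi_B\|_X+m\,\|\chi_B\|_X\Bigr)\le C r^s\Bigl(1+\tfrac{\|\chi_B\|_X}{\|\chi_A\|_X}\Bigr)\|g\chi_{4B}\|_X,
\]
with a constant depending only on $c_\Delta$, $s$ and the ball $B$ (through $\|\chi_B\|_X/\|\chi_A\|_X$).

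Combining the two estimates, $\|\tilde g\|_X\le C'\|g\chi_{4B}\|_X$, so
\[
\|u\|_{M^{s,X}}\le\|u\|_X+\|\tilde g\|_X\le C''\|g\chi_{4B}\|_X,
\]
and taking the infimum over $g\in\cD^s(u)$ concludes the proof. The main technical point is the $L^X$-bound, which is where the geometric hypothesis on $y_0$ enters: without it the ring $4B\setminus 2B$ could be empty (or have $\|\chi_A\|_X=0$) and the essinf argument would collapse. Everything else is a routine manipulation of gradients and the quasi-triangle inequality.
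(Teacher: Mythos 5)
Your proposal is correct and follows essentially the same route as the paper's proof: localizing the gradient to $g\chi_{2B}+r^{-s}|u|$ via the same case analysis, and then bounding $\|u\|_X$ through the pointwise estimate $|u(x)|\le 5^s r^s\bigl(g(x)+\essinf_{4B\setminus 2B}g\bigr)$ together with $\bigl(\essinf_{4B\setminus 2B}g\bigr)\|\chi_{4B\setminus 2B}\|_X\le\|g\chi_{4B}\|_X$, where the hypothesis on $y_0$ guarantees $\mu(4B\setminus 2B)>0$. The only differences are cosmetic (writing $|u|$ in the modified gradient and tracking the factor $r^s$ explicitly, which the paper absorbs into the $B$-dependent constant).
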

\begin{proof}
Let $g\in \cD^s(u)$. Then $g\chi_{2B}+r^{-s}u\in\cD^s(u)$. Indeed, if $x,y\in 2B$, then, by definition,
\[
|u(x)-u(y)|\le d(x,y)^s(g(x)+g(y)),
\]
and if one of the points, say $x$, does not belong to $2B$, then 
\[
|u(x)-u(y)|=|u(y)|\le d(x,y)^sr^{-s}|u(y)|.
\]
Hence, it suffices to show that $\|u\|_{X}\le C\|g\chi_{4B}\|_{X}.$
For almost every $x\in B$,
\[
|u(x)|\le 5^sr^s\left(g(x)+\essinf_{y\in 4B\setminus 2B}g(y)\right).
\]
Since $B(y_0,r)\subset 4B\setminus 2B$, we have that $\mu(4B\setminus 2B)>0$.
Clearly, 
\[
(\essinf_{4B\setminus 2B}g)\|\chi_{4B\setminus 2B}\|_X\le \|g\chi_{4B}\|_X.
\]
Hence
\[
\begin{split}
\|u\|_{X}&=\|u\chi_B\|_{X}\\
&\le C\Big(\|g\chi_B\|_{X}+(\essinf_{4B\setminus 2B}g)\|\chi_B\|_X\Big)\\
&\le C\Big(\|g\chi_B\|_{X}+\frac{\|\chi_B\|_X}{\|\chi_{4B\setminus 2B}\|_X}\|g\chi_{4B}\|_X\Big)\\
&\le C\|g\chi_{4B}\|_{X}.
\end{split}
\]
\end{proof}

\begin{theorem}\label{cap weak type 2}
 Suppose that the assumptions of Theorem \ref{main thm 2} are in force.
Let $B=B(x_0,r)$ be a ball and assume that the sphere $\{y: d(y,x_0)=6r\}$ is nonempty.
Then there exist constants $C\ge 1$ and $R>0$ such that
\[
C_{M^{s,X}}\big(\{x\in B: \M_{R}^* u(x) >\lambda\}\big)
\le C\lambda^{-1}\|u\|_{M^{s,X}}
\]
for every $u\in M^{s,X}$ and $\lambda>0$.
\end{theorem}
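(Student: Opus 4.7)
The plan is to follow the structure of the proof of Theorem \ref{cap weak type}, replacing Lemma \ref{gradient for discrete median maximal function} with Lemma \ref{gradient for discrete maximal function} and invoking Lemma \ref{uusi lemma} to turn a local gradient estimate into a full $M^{s,X}$-norm bound. The key difference from the median case is that \eqref{M q-bd} controls $(\M_1 g^q)^{1/q}$ rather than $\M_1 g$, so no direct $X$-bound on $\M_R^*(u\varphi)$ is available; Lemma \ref{uusi lemma}, which recovers a full norm from a localized gradient estimate on a ball meeting a mild sphere condition, is used to bridge this gap.

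First I would fix $R=\min(r/7,\,1/6)$ and choose a $(C/R)$-Lipschitz cutoff $\varphi\colon\cP\to[0,1]$ with $\varphi=1$ on $B(x_0,r+3R)$ and $\varphi=0$ outside $B(x_0,r+4R)$. For any $x\in B$ and rational $q\in(0,R)$, the discrete convolution $u_q(x)$ only samples $u$ on balls contained in $B(x,3q)\subset B(x_0,r+3R)$, where $u=u\varphi$; hence $\M_R^* u=\M_R^*(u\varphi)$ on $B$. A parallel support-chasing argument shows $\M_R^*(u\varphi)$ vanishes outside $B(x_0,r+7R)\subset B(x_0,2r)$, and the assumed nonempty sphere at $6r=3\cdot 2r$ is precisely what Lemma \ref{uusi lemma} requires when applied to $\M_R^*(u\varphi)$ with support ball $B(x_0,2r)$.

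Next, for arbitrary $g\in\cD^s(u)\cap X$, Lemma \ref{gradient for product} produces $h=(g+CR^{-s}|u|)\chi_{B(x_0,r+4R)}\in\cD^s(u\varphi)$, and Lemma \ref{gradient for discrete maximal function} then gives $\tilde h=C(\M_{6R}h^q)^{1/q}\in\cD^s(\M_R^*(u\varphi))$. Using subadditivity of $t\mapsto t^q$ (valid for $q<1$) to split $h^q\le g^q+CR^{-sq}|u|^q$, together with the fact that $6R\le 1$ so $\M_{6R}\le\M_1$, two applications of \eqref{M q-bd} on the ball $B(x_0,8r)$ give
\[
\|\tilde h\chi_{B(x_0,8r)}\|_X\le C\bigl(\|g\|_X+R^{-s}\|u\|_X\bigr).
\]
Lemma \ref{uusi lemma} (applied with support ball $B(x_0,2r)$, so that $4\cdot 2r=8r$) then yields $\|\M_R^*(u\varphi)\|_{M^{s,X}}\le C\|\tilde h\chi_{B(x_0,8r)}\|_X$; taking the infimum over $g\in\cD^s(u)\cap X$ gives $\|\M_R^*(u\varphi)\|_{M^{s,X}}\le C\|u\|_{M^{s,X}}$, with $C$ depending on $B$ through $R$.

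Finally, the lower semicontinuity of $\M_R^*(u\varphi)$ makes the set $\{x\in\cP:\M_R^*(u\varphi)(x)>\lambda\}$ an open neighbourhood of $\{x\in B:\M_R^* u(x)>\lambda\}$, so $\lambda^{-1}\M_R^*(u\varphi)$ is admissible for the capacity of the latter, and the stated estimate follows. The main difficulty I foresee is coordinating the parameters so that three constraints hold simultaneously: $6R\le 1$ (to invoke \eqref{M q-bd}), $r+7R\le 2r$ (to keep the support of $\M_R^*(u\varphi)$ inside a ball whose triple-radius sphere is the one given by hypothesis), and the $R$-dependence of the Lipschitz factor $C/R$ being absorbable into a $B$-dependent constant. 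Once these parameters are aligned the remainder is a routine combination of Lemmas \ref{gradient for product}, \ref{gradient for discrete maximal function}, \ref{uusi lemma} with the hypothesis \eqref{M q-bd}.
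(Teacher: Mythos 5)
Your proposal is correct and follows essentially the same route as the paper: cutoff $\varphi$ so that $\M_R^*u=\M_R^*(u\varphi)$ on $B$ and $\M_R^*(u\varphi)$ is supported in $2B$, then Lemma \ref{gradient for discrete maximal function} plus Lemma \ref{uusi lemma} (whose $3\cdot 2r=6r$ sphere condition is exactly the hypothesis), $\M_{6R}\le\M_1$, and \eqref{M q-bd}, finishing with lower semicontinuity. The only cosmetic difference is that you build an explicit gradient $h$ of $u\varphi$ and split $h^q$ by subadditivity, where the paper simply takes the infimum over $g\in\cD^s(u\varphi)$ and invokes $\|u\varphi\|_{M^{s,X}}\le C\|u\|_{M^{s,X}}$; both are fine.
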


\begin{proof}
Let $R=\frac19\min\{1,r\}$ and let $\varphi:\cP\to [0,1]$ be a Lipschitz function such that $\varphi=1$ in $B(x_0,r+4R)$ and $\varphi=0$ outside $B(x_0,r+5R)$.
Then, by 
Lemma \ref{gradient for product},
 $u\varphi\in M^{s,X}$ 
and 
\begin{equation}\label{est}
\|u\ph\|_{M^{s,X}}\le C\|u\|_{M^{s,X}}.
\end{equation}
If $x\in B$, then $\M^*_Ru(x)=\M^*_R(u\ph)(x)$.  Hence,
\[
\{x\in B: \M^*_R u(x) >\lambda\}\subset \{x\in \cP: \lambda^{-1}\M^*_R (u\ph)(x) >1\}.
\]
If $x\in \cP\setminus B(x_0,r+9R)$, then $\M^*_r(u\ph)(x)=0$.
In particular, $\M^*_r(u\ph)=0$ outside $2B$.
Hence, by Lemma \ref{gradient for discrete maximal function} and Lemma \ref{uusi lemma},  $\M^{*}_R (u\ph)\in M^{s,X}$ and
\[
\|\M^{*}_R (u\ph)\|_{M^{s,X}}\le C\inf_{g\in\cD^s(\M^*_R(u\ph))}\|g\chi_{8B}\|_{X}
\le C\inf_{g\in\cD^s(u\ph)}\|(\M_{6R}g^q)^{1/q}\chi_{8B}\|_{X},
\]
where $q=Q/(Q+s)$.
Since $\M^{*}_R (u\ph)$ is lower semicontinuous, it follows that
\[
\begin{split}
C_{M^{s,X}}\big(\{x\in B: \M^{*}_{R} u(x) >\lambda\}\big)
&\le \lambda^{-1}\|\M^{*}_R (u\ph)\|_{M^{s,X}}\\
&\le C\lambda^{-1}\inf_{g\in\cD^s(u\ph)}\|(\M_{6R}g^q)^{1/q}\chi_{8B}\|_{X}.
\end{split}
\]
The fact that $6R<1$, assumption \eqref{M q-bd} and \eqref{est} imply that
\[
\begin{split}
\inf_{g\in\cD^s(u\ph)}\|(\M_{6R}g^q)^{1/q}\chi_{8B}\|_{X}
&\le \inf_{g\in\cD^s(u\ph)}\|(\M_{1}g^q)^{1/q}\chi_{8B}\|_{X}\\
&\le \inf_{g\in\cD^s(u\ph)}\|g\|_{X}\\
&\le \|u\varphi\|_{M^{s,X}}\\
&\le C\|u\|_{M^{s,X}}\\
\end{split}
\]
and the claim follows.
\end{proof}

The proof of the next lemma, which is very similar to the proof of Lemma \ref{cap limsup implies gen leb points}, will be omitted.

\begin{lemma}\label{cap limsup implies leb points}
Suppose that $\|\cdot\|_X$ is absolutely continuous and that $B\subset\cP$ is ball such that, for every $\lambda>0$,
\begin{equation}\label{cap limsup 2}
   \lim_{i\to \infty} C_{M^{s,X}}(\{x\in B: \limsup_{r\to 0}|u_i|_{B(x,r)}>\lambda\})=0
\end{equation}
whenever $\lim_{i\to\infty}\|u_i\|_{M^{s,X}}=0$. Then, for every quasicontinuous $u\in M^{s,X}$, quasievery point in $B$ is a Lebesgue point of $u$.
\end{lemma}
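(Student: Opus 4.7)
The plan is to mimic the proof of Lemma \ref{cap limsup implies gen leb points} almost verbatim, replacing $\gamma$-medians by integral averages throughout. By Lemma \ref{cont lemma}, there exist continuous functions $v_k\in M^{s,X}$ with $\|u-v_k\|_{M^{s,X}}\to 0$; set $w_k=u-v_k$. Since $u$ is quasicontinuous by hypothesis and each $v_k$ is continuous (hence quasicontinuous), $w_k$ is quasicontinuous for every $k$, and so is $|w_k|$.

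By the triangle inequality applied pointwise and then averaged,
\[
|u-u(x)|_{B(x,r)}\le |v_k-v_k(x)|_{B(x,r)}+|w_k|_{B(x,r)}+|w_k(x)|,
\]
and the continuity of $v_k$ at $x$ forces the first term on the right to tend to $0$ as $r\to 0$. Hence
\[
\limsup_{r\to 0}|u-u(x)|_{B(x,r)}\le \limsup_{r\to 0}|w_k|_{B(x,r)}+|w_k(x)|.
\]
Applying Lemma \ref{r-subadd} to the corresponding sublevel sets yields, for every $\lambda>0$,
\[
C_{M^{s,X}}\bigl(\{x\in B:\limsup_{r\to 0}|u-u(x)|_{B(x,r)}>\lambda\}\bigr)^{\rho}\le 8A_{k}^{\rho}+8B_{k}^{\rho},
\]
where $A_k=C_{M^{s,X}}(\{x\in B:\limsup_{r\to 0}|w_k|_{B(x,r)}>\lambda/2\})$ and $B_k=C_{M^{s,X}}(\{x\in B:|w_k(x)|>\lambda/2\})$.

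The hypothesis \eqref{cap limsup 2}, applied to the null sequence $(w_k)$ in $M^{s,X}$, forces $A_k\to 0$ as $k\to\infty$. For $B_k$, note that $2\lambda^{-1}|w_k|$ is a quasicontinuous admissible function for the corresponding set, so Lemma \ref{cap lemma} gives $B_k\le c_\Delta\widetilde C_{M^{s,X}}(\{|w_k|>\lambda/2\})\le 2c_\Delta\lambda^{-1}\|w_k\|_{M^{s,X}}\to 0$. Hence the left-hand side vanishes for every $\lambda>0$. Writing the bad set as
\[
\bigcup_{n\in\n}\{x\in B:\limsup_{r\to 0}|u-u(x)|_{B(x,r)}>1/n\}
\]
and invoking Lemma \ref{r-subadd} once more shows it has $M^{s,X}$-capacity zero. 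Therefore $\lim_{r\to 0}|u-u(x)|_{B(x,r)}=0$ quasieverywhere in $B$, which in particular forces $\lim_{r\to 0}u_{B(x,r)}=u(x)$, i.e., $x$ is a Lebesgue point of $u$. I do not anticipate any genuine obstacle beyond those already dispatched in the median version; the only check specific to this case is the quasicontinuity of $|w_k|$, which is immediate from that of $w_k$.
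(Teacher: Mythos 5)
Your proof is correct and is exactly the adaptation the paper has in mind: the paper omits this proof, stating only that it is ``very similar to the proof of Lemma \ref{cap limsup implies gen leb points}'', and your argument reproduces that proof with integral averages in place of medians (the pointwise splitting $|u-u(x)|_{B(x,r)}\le|v_k-v_k(x)|_{B(x,r)}+|w_k|_{B(x,r)}+|w_k(x)|$, the capacity estimate via Lemmas \ref{r-subadd} and \ref{cap lemma}, and the final union over $\lambda=1/n$). No gaps.
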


\begin{proof}[Proof of Theorem \ref{main thm 2}]
We may assume that $\cP$ contains at least two points. Then $\cP$ can be covered by balls $B_k=B(x_k,r_k)$, $k\in I$, where $I\subset\n$, such that the spheres $\{y: d(x_k,y)=6r_k\}$ are nonempty. 

Let $u\in\dot M^{s,X}$ be quasicontinuous and, for $k\in I$, let $\ph_k$ be a Lipschitz
function of bounded support such that $\ph_k=1$ in $B(x_k,r_k)$. Then, by Lemma \ref{gradient for product}, $u_k=u\ph_k\in M^{s,X}$.
By Theorem \ref{cap weak type 2}, Lemma \ref{comparability} and Lemma \ref{cap limsup implies leb points}, for every $k\in I$, quasievery point in $B_k$ is a Lebesgue point of $u_k$ and hence of $u$. Hence,
by Lemma \ref{r-subadd}, quasievery point in $\cP$ is a Lebesgue point of $u$.
\end{proof}

\section{Proof of Theorem \ref{main thm 3}}

If $\alpha_X<\infty$, then clearly $\M^\gamma$ is bounded on $X$. Hence, the first part of Theorem \ref{main thm 3} follows from Theorem \ref{main thm}. The second part follows from Theorem \ref{main thm 2} via the following lemma.

\begin{lemma}\label{index} Let $p\le 1$. If $\alpha_X<1/p$, then the operator $u\mapsto (\M |u|^p)^{1/p}$ is bounded on $X$.
\end{lemma}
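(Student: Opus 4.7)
My plan is to reduce the claim to the boundedness of the Hardy--Littlewood maximal operator itself, applied on a suitable rescaled function space, and then to invoke the Lerner--P\'erez theorem mentioned in the introduction. Specifically, I would pass to the $p$-convexification $Y=X^{(1/p)}$ of $X$, defined as the set of $f\in L^0$ with $|f|^{1/p}\in X$, equipped with $\|f\|_Y=\||f|^{1/p}\|_X^p$. Since $p\le 1$, the map $t\mapsto t^{1/p}$ is convex on $[0,\infty)$ and the map $t\mapsto t^p$ is subadditive; combining $(a+b)^{1/p}\le 2^{1/p-1}(a^{1/p}+b^{1/p})$ with $(c+d)^p\le c^p+d^p$ and the quasi-triangle inequality of $\|\cdot\|_X$ yields a quasi-triangle inequality in $Y$. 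The remaining axioms of a quasi-Banach function space transfer immediately via the order isomorphism $f\leftrightarrow f^{1/p}$ between the positive cones of $Y$ and $X$; in particular $f_k\uparrow f$ if and only if $f_k^{1/p}\uparrow f^{1/p}$, which hands us the Fatou property of $Y$ from that of $X$.

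Next I would compute the generalized upper Boyd index of $Y$. The crucial identity is $\M^\gamma(g^p)=(\M^\gamma g)^p$ for $g\ge 0$, which follows from $\{g^p>a\}=\{g>a^{1/p}\}$ and the definition of the $\gamma$-median in Section \ref{sec: preliminaries}. Applying this with $g=f^{1/p}$ gives, for $f\ge 0$,
\[
\|\M^\gamma f\|_Y=\|(\M^\gamma f^{1/p})^p\|_Y=\|\M^\gamma f^{1/p}\|_X^p.
\]
Taking the supremum over the unit ball of $Y$, which under $f\mapsto f^{1/p}$ corresponds exactly to the unit ball of $X$, I obtain $\Phi_Y(\gamma)=\Phi_X(\gamma)^p$. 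Passing to the limit in the defining formula for $\alpha$ then yields $\alpha_Y=p\,\alpha_X$, and the hypothesis $\alpha_X<1/p$ becomes $\alpha_Y<1$.

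At this point I would invoke the theorem of Lerner--P\'erez \cite{LP2} cited in the introduction, which (in its extension from $\re^n$ to doubling metric measure spaces noted there) asserts that the Hardy--Littlewood maximal operator is bounded on a quasi-Banach function space precisely when its generalized upper Boyd index is strictly less than one. This gives $\|\M f\|_Y\le C\|f\|_Y$ for all $f\in Y$. Applying it with $f=|u|^p$ where $u\in X$, and using $\||u|^p\|_Y=\|u\|_X^p$, we get
\[
\|(\M|u|^p)^{1/p}\|_X^p=\|\M|u|^p\|_Y\le C\||u|^p\|_Y=C\|u\|_X^p,
\]
which is the desired bound.

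The only real obstacle is the careful verification that $Y$ is genuinely a quasi-Banach function space, which needs the two inequalities for $t\mapsto t^{1/p}$ and $t\mapsto t^p$ noted above; everything else is a straightforward homogeneity calculation combined with the cited boundedness result. An alternative, self-contained route avoiding the Lerner--P\'erez statement would use the pointwise dyadic bound $\M(u^p)\le \sum_{k\ge 0}2^{-k-1}(\M^{2^{-k-1}}u)^p$ (obtained by integrating $m^\gamma$ against the decreasing rearrangement on each ball), followed by an Aoki--Rolewicz estimate in $Y$ and the sub-polynomial growth $\Phi_X(\gamma)\le C\gamma^{-\beta}$ for any $\beta$ with $\alpha_X<\beta<1/p$; the geometric sum converges precisely because $\beta p<1$.
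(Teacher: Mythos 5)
Your proposal is mathematically sound, but your primary route and the paper's actual proof diverge in a way worth flagging. The paper does not pass to the convexification $Y=X^{(1/p)}$ and does not invoke the Lerner--P\'erez characterization at all; it only uses the easy, quantitative consequence of the definition of the index: $\alpha_X<1/p$ yields constants $\alpha<1/p$ and $C$ with $\|\M^\gamma u\|_X\le C\gamma^{-\alpha}\|u\|_X$ for all $0<\gamma<1$. It then writes $\vint{B}|u|^p\,d\mu=\int_0^1 m^\gamma_{|u|}(B)^p\,d\gamma$ via the decreasing rearrangement, discretizes dyadically to get $\M|u|^p\le\sum_{i\ge1}2^{-i}(\M^{2^{-i}}u)^p$, takes $1/p$-th roots using H\"older's inequality for series to obtain $(\M|u|^p)^{1/p}\le C\sum_{i\ge1}2^{-i\eps/p}\M^{2^{-i}}u$ for some $\eps<1$ with $\alpha<\eps/p$, and closes with Aoki--Rolewicz in $X$ itself. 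This is precisely your ``alternative, self-contained route'' (up to reindexing, and up to whether the summation is organized in $X$ after a H\"older step or in $Y$ directly). Your main route is correct in its reductions: the convexification is a quasi-Banach function space, the identity $\M^\gamma(g^p)=(\M^\gamma g)^p$ for $g\ge0$ holds, and $\Phi_Y=\Phi_X^p$, hence $\alpha_Y=p\,\alpha_X<1$. The one genuine weak point is the final step: the implication ``$\alpha_Y<1$ $\Rightarrow$ $\M$ bounded on $Y$'' is the hard direction of the Lerner--P\'erez theorem, which the paper cites only for spaces over $\rn$; its extension to general doubling metric measure spaces is true but is neither proved nor referenced in the paper, so as written your main argument rests on a black box not available in this setting. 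The paper's proof (equivalently, your alternative sketch) avoids this entirely because it needs only the trivial direction, the polynomial bound on $\Phi_X(\gamma)$ coming straight from the definition of $\alpha_X$. If you want the argument to stand on the paper's own resources, promote the alternative sketch to the main proof.
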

\begin{proof}
By assumption, there are constants $\alpha<1/p$ and $C>0$ such that
\begin{equation}\label{est1}
\|\M^\gamma u\|_X\le C\gamma^{-\alpha}\|u\|_X.
\end{equation}
for every $u\in X$ and $0<\gamma<1$.
Denote by $v^*$ the decreasing rearrangement of a function $v$, that is,
\[
v^*(t)=\inf\big\{a\ge 0: \mu(\{x\in \cP: |v(x)|>a\})< t\}.
\]
Then, for every ball $B$, we have
\[
\vint{B} |u|^p\,d\mu=\frac{1}{\mu(B)}\int_0^{\mu(B)}(u\chi_B)^*(t)^p\,dt=\int_0^1 (u\chi_B)^*(\gamma\mu(B))^p\,d\gamma=\int_0^1 m^\gamma_{|u|}(B)^p\,d\gamma.
\]
Hence,
\[
M|u|^p(x)\le\int_0^1 \M^\gamma u(x)^p\,d\gamma\le\sum_{i=1}^\infty 2^{-i}\M^{2^{-i}}u(x)^p.
\]
Let $0<\eps<1$ be such that $\alpha<\eps/p$. By the H\"older inequality, we obtain
\[
(M|u|^p(x))^{1/p}\le C\sum_{i=1}^\infty 2^{-i\eps/p}\M^{2^{-i}}u(x).
\]
Thus, by \eqref{est1} and \eqref{Aoki},
\[
\begin{split}
\|(M|u|^p)^{1/p}\|_X&\le C\|\sum_{i=1}^\infty 2^{-i\eps/p}\M^{2^{-i}}u\|_X\le C(\sum_{i=1}^\infty\| 2^{-i\eps/p}\M^{2^{-i}}u\|_X^{\sigma})^{1/\sigma}\\
&\le C(\sum_{i=1}^\infty  2^{-i(\eps/p-\alpha)\sigma})^{1/\sigma}\|u\|_X\le C\|u\|_X.
\end{split}
\]
\end{proof}

\section{Examples}
\subsection{Lorentz spaces} For $0<p<\infty$, $0<q\le\infty$ and $u\in L^0$, denote 
\[
\|u\|_{L^{p,q}}=
\left(\int_0^\infty \lambda^q\mu(\{x\in\cP:|u(x)|>\lambda\})^{q/p}\,\frac{d\lambda}{\lambda}\right)^{1/q}
\]
when $0<q<\infty$ and
\[
\|u\|_{L^{p,\infty}}=\sup_{\lambda>0}\ \lambda\,\mu(\{ x\in\cP:|u(x)|>\lambda\})^{1/p}.
\]
Then $L^{p,q}=\{u\in L^0: \|u\|_{L^{p,q}}<\infty\}$ equipped with $\|\cdot\|_{L^{p,q}}$ is a quasi-Banach function space.
If $0<p,q<\infty$, quasinorm $\|\cdot\|_{L^{p,q}}$ is absolutely continuous.

\begin{lemma}\label{Mgamma ie} There exists a constant $C=C(c_d)$ such that
\begin{equation}
\mu(\{x\in\cP: \M^\gamma u(x)>\lambda\})\le C \gamma^{-1}\mu(\{x\in\cP: |u(x)|>\lambda\}) 
\end{equation}
for every $u\in L^0$ and $\lambda>0$.
\end{lemma}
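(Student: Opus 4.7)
The plan is to reduce the median weak-type estimate to the classical weak $(1,1)$ estimate for the centered Hardy--Littlewood maximal function, applied to the characteristic function of the super-level set $E_\lambda=\{x\in\cP:|u(x)|>\lambda\}$.

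First I would unwind the definition of the median. Fix $x\in\cP$ and suppose $\M^\gamma u(x)>\lambda$, so there is $r>0$ with $m^\gamma_{|u|}(B(x,r))>\lambda$. By the definition of the $\gamma$-median, for every $a\in(\lambda,m^\gamma_{|u|}(B(x,r)))$ one has
\[
\mu(\{y\in B(x,r):|u(y)|>a\})\ge\gamma\,\mu(B(x,r)).
\]
Letting $a\to\lambda^+$ and using the fact that $\{|u|>a\}$ increases to $\{|u|>\lambda\}$ as $a\to\lambda^+$ (so $\mu$ is continuous from below along this family), I get
\[
\mu(B(x,r)\cap E_\lambda)\ge\gamma\,\mu(B(x,r)),
\]
which rewritten means $\M\chi_{E_\lambda}(x)\ge\gamma$. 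Hence
\[
\{x\in\cP:\M^\gamma u(x)>\lambda\}\subset\{x\in\cP:\M\chi_{E_\lambda}(x)\ge\gamma\}.
\]

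The second step is to invoke the weak-type $(1,1)$ inequality for the centered Hardy--Littlewood maximal function $\M$ on the doubling space $\cP$; this is standard and produces a constant $C=C(c_d)$ such that
\[
\mu(\{\M f>t\})\le C t^{-1}\|f\|_{L^1}
\]
for all $f\in L^1(\cP)$ and $t>0$ (via a $5r$-type covering argument). Applying this with $f=\chi_{E_\lambda}$ and $t=\gamma$ gives
\[
\mu(\{\M\chi_{E_\lambda}\ge\gamma\})\le C\gamma^{-1}\mu(E_\lambda),
\]
which combined with the inclusion above yields the claim.

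The only mildly delicate point is the passage $a\to\lambda^+$ used to convert the strict inequality in the median's defining condition into an estimate on $\{|u|>\lambda\}$; I would make sure to write that step explicitly so that the constant $C$ ends up depending only on $c_d$ and not on $\gamma$ or $\lambda$. Otherwise the argument is essentially a one-line reduction to the classical maximal inequality.
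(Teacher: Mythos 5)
Your proof is correct and follows essentially the same route as the paper: both reduce the claim to the weak $(1,1)$ inequality for $\M$ applied to $\chi_{E_\lambda}$ via the observation that $\M^\gamma u(x)>\lambda$ forces $\M\chi_{E_\lambda}(x)\ge\gamma$. You are in fact slightly more careful than the paper's one-line ``iff'', since the forward implication only yields $\ge\gamma$ rather than $>\gamma$, a distinction that is harmless for the weak-type bound.
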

\begin{proof}
It follows easily from the definitions that,
for every $u\in L^0$, $x\in \cP$, $0<\gamma<1$ and $\lambda>0$, we have
\[
\M^\gamma u(x)>\lambda\iff \M \chi_{\{y\in\cP:|u(y)|>\lambda\}}(x)>\gamma.
\]
Hence, the claim follows from the well-known weak type inequality 
for the Hardy--Littlewood maximal operator.
\end{proof}

The following lemma is an immediate consequence of Lemma \ref{Mgamma ie}.

\begin{lemma}\label{index for Lorentz}
Let $0<p<\infty$ and $0<q\le\infty$. There exists a constant $C=C(c_d,p)$ such that
\[
\|\M^\gamma u\|_{L^{p,q}}\le C\gamma^{-1/p}\|u\|_{L^{p,q}}
\]
for every $u\in L^{p,q}$ and $0<\gamma<1$. Consequently, $\alpha_{L^{p,q}}\le 1/p$.
\end{lemma}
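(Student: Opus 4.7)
The plan is to read off the bound directly from the distribution function representation of the Lorentz quasinorm, using Lemma \ref{Mgamma ie} as the only nontrivial ingredient. For a measurable function $v$, set $\mu_v(\lambda)=\mu(\{x\in\cP:|v(x)|>\lambda\})$. The pointwise bound from Lemma \ref{Mgamma ie} gives
\[
\mu_{\M^\gamma u}(\lambda)\le C\gamma^{-1}\mu_{|u|}(\lambda)
\]
for every $\lambda>0$, and the whole proof consists of feeding this into the formula for $\|\cdot\|_{L^{p,q}}$.

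For $0<q<\infty$, I would start from
\[
\|\M^\gamma u\|_{L^{p,q}}^q=\int_0^\infty \lambda^q\,\mu_{\M^\gamma u}(\lambda)^{q/p}\,\frac{d\lambda}{\lambda},
\]
apply the weak-type inequality pointwise in $\lambda$ to pull out the factor $(C\gamma^{-1})^{q/p}$, and recognize the remaining integral as $\|u\|_{L^{p,q}}^q$. Taking $q$-th roots yields $\|\M^\gamma u\|_{L^{p,q}}\le C^{1/p}\gamma^{-1/p}\|u\|_{L^{p,q}}$. For $q=\infty$ the same substitution inside the supremum gives the analogous bound, so the two cases are handled uniformly.

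For the Boyd index statement, I would just insert the inequality into the definition of $\Phi_X$: by what we have shown, $\Phi_{L^{p,q}}(\gamma)\le C\gamma^{-1/p}$, hence
\[
\frac{\log\Phi_{L^{p,q}}(\gamma)}{\log(1/\gamma)}\le \frac{\log C}{\log(1/\gamma)}+\frac{1}{p}\longrightarrow \frac{1}{p}
\]
as $\gamma\to 0$, giving $\alpha_{L^{p,q}}\le 1/p$.

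There is no real obstacle here; the only thing to watch is that the constant $C$ from Lemma \ref{Mgamma ie} is raised to the power $1/p$, so the final constant depends on $c_d$ and $p$ as claimed, and that the argument is valid for every $0<q\le\infty$ with no exponent restrictions needed beyond those in the hypothesis.
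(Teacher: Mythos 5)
Your proof is correct and follows exactly the route the paper intends: the paper omits the argument, calling the lemma ``an immediate consequence of Lemma \ref{Mgamma ie},'' and your computation (inserting the distribution-function inequality into the Lorentz quasinorm, treating $q<\infty$ and $q=\infty$ uniformly, and then passing to the Boyd index) is precisely the omitted verification. Nothing is missing.
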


Lemma \ref{index for Lorentz} and Theorem \ref{main thm 3} imply the following result for Lorentz--Haj\l asz functions.

\begin{theorem}\label{Lorentz 1}
Let $0<s\le 1$, $0<p,q<\infty$ and let
$u\in\dot M^{s,L^{p,q}}$ be quasicontinuous. Then quasievery point is a generalized Lebesgue point of $u$.
Moreover, if $p>Q/(Q+s)$, then quasievery point is a Lebesgue point of $u$.
\end{theorem}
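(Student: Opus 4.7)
The plan is to reduce the theorem directly to Theorem \ref{main thm 3} by verifying its two hypotheses for the Lorentz space $X = L^{p,q}$.

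First, I would recall that for $0 < p, q < \infty$ the quasinorm $\|\cdot\|_{L^{p,q}}$ is absolutely continuous; this is stated in the paragraph introducing the Lorentz spaces and is standard (it follows from the dominated convergence theorem applied to the layer-cake representation of $\|u\chi_{E_k}\|_{L^{p,q}}^q$, using that $\mu(\{|u|>\lambda\}\cap E_k)\to 0$ pointwise in $\lambda$ together with the integrable majorant coming from $u \in L^{p,q}$). So the blanket hypothesis of Theorem \ref{main thm 3} on the quasinorm is satisfied.

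Next, I would invoke Lemma \ref{index for Lorentz}, which gives
\[
\Phi_{L^{p,q}}(\gamma) \le C\gamma^{-1/p},
\]
so that the generalized upper Boyd index satisfies
\[
\alpha_{L^{p,q}} \le \frac{1}{p} < \infty.
\]
For part (1) of Theorem \ref{Lorentz 1}, this finiteness of $\alpha_X$ is exactly the hypothesis of Theorem \ref{main thm 3}(1), so quasievery point of a quasicontinuous $u \in \dot M^{s,L^{p,q}}$ is a generalized Lebesgue point.

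For part (2), under the extra assumption $p > Q/(Q+s)$ we have
\[
\alpha_{L^{p,q}} \le \frac{1}{p} < \frac{Q+s}{Q},
\]
so the hypothesis $\alpha_X < (Q+s)/Q$ of Theorem \ref{main thm 3}(2) is satisfied, and the Lebesgue point conclusion follows. The only nonroutine point is the absolute continuity of the Lorentz quasinorm, which is standard; beyond that the proof is a direct application of the main theorem combined with the Boyd index estimate from Lemma \ref{index for Lorentz}.
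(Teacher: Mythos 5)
Your proof is correct and is essentially identical to the paper's: the paper also deduces Theorem \ref{Lorentz 1} directly from Lemma \ref{index for Lorentz} (giving $\alpha_{L^{p,q}}\le 1/p$) together with Theorem \ref{main thm 3}, using the noted absolute continuity of $\|\cdot\|_{L^{p,q}}$ for $0<p,q<\infty$.
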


\subsection{Orlicz spaces} 
Let $\Phi:[0,\infty)\to[0,\infty)$ be a continuous increasing function and suppose that there exists a constant $C\ge 1$ such that 
\begin{equation}\label{Phi 1}
\Phi(t/C)\le \Phi(t)/2
\end{equation}
for every $t>0$. Then
\[
L^\Phi=\{u\in L^0: \int_\cP\Phi(|u|/\lambda)d\mu<\infty \text{ for some }\lambda>0\}
\]
equipped with a quasi-norm
\[
\|u\|_{L^\Phi}=\inf\{\lambda>0: \int_\cP\Phi(| u|/\lambda)d\mu\le 1 \}
\]
becomes a quasi-Banach function space.

It is easy to see that \eqref{Phi 1} is equivalent to the existence of constants $C_0\ge 1$ and $\beta>0$ such that
\begin{equation}\label{Phi 2}
\Phi(s)\le C_0(s/t)^\beta \Phi(t)
\end{equation}
whenever $0<s\le t<\infty$.

A function $\Phi:[0,\infty)\to[0,\infty)$ is doubling, if there exists a constant $C$ such that
\begin{equation}\label{Phi doubling}
\Phi(2t)\le C\Phi(t)
\end{equation}
for every $t>0$. It is easy to see that if $\Phi$ is doubling, then $\|\cdot\|_{L^\phi}$ is absolutely continuous.

\begin{lemma}\label{Orlicz lemma 1}
Let $\Phi$ satisfy \eqref{Phi 2}. Then there exists a constant $C$ such that
\[
\|\M^\gamma u\|_{L^\Phi}\le C\gamma^{-1/\beta}\|u\|_{L^\Phi}
\]
for every $u\in L^\Phi$ and $0<\gamma<1$. Consequently, $\alpha_{L^\Phi}\le 1/\beta$.
\end{lemma}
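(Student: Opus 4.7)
The plan is to deduce the quasinorm bound from the distributional inequality of Lemma \ref{Mgamma ie} combined with the lower $\beta$-growth of $\Phi$ recorded in \eqref{Phi 2}, working directly with the Luxemburg-type definition of $\|\cdot\|_{L^\Phi}$. I do not use the absolute continuity of the quasinorm or the doubling of $\mu$ anywhere beyond what is baked into Lemma \ref{Mgamma ie}.

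First, I would pass from the distributional estimate to a modular estimate. Writing the Orlicz integral via layer cake,
\[
\int_\cP \Phi\bigl(\M^\gamma u/a\bigr)\,d\mu \;=\; \int_0^\infty \mu\bigl(\{x\in\cP:\M^\gamma u(x)>at\}\bigr)\,d\Phi(t),
\]
and inserting Lemma \ref{Mgamma ie} pointwise in $t$, I obtain
\[
\int_\cP \Phi\bigl(\M^\gamma u/a\bigr)\,d\mu \;\le\; C\gamma^{-1}\int_\cP \Phi(|u|/a)\,d\mu
\]
for every $a>0$ and every $u\in L^0$.

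Next, I exploit \eqref{Phi 2}: for any $\sigma\ge 1$ and any $t>0$ one has $\Phi(t/\sigma)\le C_0\sigma^{-\beta}\Phi(t)$. Taking $a=\sigma\|u\|_{L^\Phi}$, this gives
\[
\int_\cP \Phi\bigl(|u|/(\sigma\|u\|_{L^\Phi})\bigr)\,d\mu \;\le\; C_0\sigma^{-\beta}\int_\cP \Phi\bigl(|u|/\|u\|_{L^\Phi}\bigr)\,d\mu \;\le\; C_0\sigma^{-\beta}.
\]
Combining with the previous display and choosing $\sigma=(CC_0)^{1/\beta}\gamma^{-1/\beta}$ (enlarging the constant to ensure $\sigma\ge 1$ when $\gamma$ is close to $1$, a range which is trivial anyway), the modular of $\M^\gamma u/(\sigma\|u\|_{L^\Phi})$ is at most $1$, so by the definition of the Luxemburg quasinorm
\[
\|\M^\gamma u\|_{L^\Phi}\;\le\;\sigma\|u\|_{L^\Phi}\;\le\;C'\gamma^{-1/\beta}\|u\|_{L^\Phi},
\]
which is the desired estimate. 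The bound $\alpha_{L^\Phi}\le 1/\beta$ is then immediate from the definition of the generalized upper Boyd index, since $\Phi_{L^\Phi}(\gamma)\le C'\gamma^{-1/\beta}$ implies $\log\Phi_{L^\Phi}(\gamma)/\log(1/\gamma)\to 1/\beta$ from above as $\gamma\to 0$.

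The only technical point is justifying the layer-cake identity in the first step for a merely continuous increasing $\Phi$; I would handle this via Riemann--Stieltjes integration (available since $\Phi$ is monotone) together with Fubini, or alternatively by monotone approximation of $\Phi$ by smooth increasing functions and passing to the limit. This aside, the argument is a three-line calculation once Lemma \ref{Mgamma ie} and \eqref{Phi 2} are on the table.
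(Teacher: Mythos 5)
Your proof is correct and follows essentially the same route as the paper: both reduce the claim to the modular inequality $\int_\cP\Phi(\M^\gamma u/\lambda)\,d\mu\le C\gamma^{-1}\lambda^{-\beta}\int_\cP\Phi(|u|)\,d\mu$ derived from Lemma \ref{Mgamma ie} and \eqref{Phi 2}, and then conclude from the definition of the Luxemburg quasinorm by taking $\lambda\sim\gamma^{-1/\beta}$. The only (cosmetic) difference is the intermediate step: the paper obtains the modular bound from the pointwise commutation $\Phi(\M^\gamma u)=\M^\gamma\Phi(u)$ combined with the $L^1$-bound for $\M^\gamma$, whereas you integrate the distributional inequality directly against $d\Phi$ via the layer-cake formula --- which indeed requires the Riemann--Stieltjes justification you flag at the end, a technicality the paper's commutation trick avoids.
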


\begin{proof}
We may assume $\|u\|_{L^\Phi}\le 1$. Since $\Phi(\M^\gamma u)=\M^\gamma\Phi(u)$ and, by Lemma \ref{Mgamma ie},
$\|\M^\gamma v\|_{L^1}\le C_1\gamma^{-1}\|v\|_{L^1}$ for all $v\in L^1$, we have that
\[
\begin{split}
    \|\Phi((\M^\gamma u)/\lambda)\|_{L^1}&\le C_0\lambda^{-\beta} \|\Phi(\M^\gamma u)\|_{L^1}=  C_0\lambda^{-\beta}\|M^\gamma \Phi(u)\|_{L^1}\\
    &\le C_0C_1\lambda^{-\beta}\gamma^{-1}\|\Phi(u)\|_{L^1}\le C_0C_1\lambda^{-\beta}\gamma^{-1}\le 1
\end{split}
\]
when $\lambda\ge (C_0C_1)^{1/\beta}\gamma^{-1/\beta}$.
\end{proof}

By combining Theorem \ref{main thm 3} and Lemma \ref{Orlicz lemma 1}, we obtain the following result
for Orlicz--Haj\l asz spaces $M^{s,\Phi}:=M^{s,L^\Phi}$.

\begin{theorem}\label{Orlicz 1}
Suppose that $\mu$ satisfies \eqref{Q} and that $\Phi$ is doubling and satisfies \eqref{Phi 1}. Let $0<s\le 1$ and let $u\in\dot M^{s,\Phi}$ be quasicontinuous. Then quasievery point is a generalized Lebesgue point of $u$.
Moreover, if $\Phi$ satisfies \eqref{Phi 2} with $\beta>Q/(Q+s)$, then quasievery point
is a Lebesgue point of $u$.
\end{theorem}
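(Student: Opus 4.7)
The plan is to derive both statements directly from Theorem \ref{main thm 3} by checking its hypotheses for the quasi-Banach function space $X = L^\Phi$. Since $\mu$ is assumed to satisfy \eqref{Q}, only two things need verification: that $\|\cdot\|_{L^\Phi}$ is absolutely continuous, and that $\alpha_{L^\Phi}$ is finite (for the first conclusion) or strictly less than $(Q+s)/Q$ (for the second). The first of these is the standard fact, mentioned just before Lemma \ref{Orlicz lemma 1}, that a doubling Young-type function produces an absolutely continuous Orlicz quasinorm; this handles the regularity hypothesis on $X$ in both parts of Theorem \ref{main thm 3}.

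For the Boyd index, I will invoke Lemma \ref{Orlicz lemma 1}. The lemma requires a growth condition of the form \eqref{Phi 2}, so the first step is to note the equivalence recorded in the excerpt: condition \eqref{Phi 1} holds if and only if there exist $C_0\ge 1$ and $\beta>0$ such that \eqref{Phi 2} is satisfied. Consequently, under the hypotheses of the first part of the theorem, some admissible $\beta>0$ is available, and Lemma \ref{Orlicz lemma 1} yields
\[
\alpha_{L^\Phi}\le \frac{1}{\beta}<\infty.
\]
Applying Theorem \ref{main thm 3}(1) to $u\in \dot M^{s,\Phi}$ then gives the first conclusion: quasievery point of $\cP$ is a generalized Lebesgue point of $u$.

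For the second part, the hypothesis is that \eqref{Phi 2} is satisfied with a specific $\beta>Q/(Q+s)$. Using Lemma \ref{Orlicz lemma 1} with this $\beta$, we obtain
\[
\alpha_{L^\Phi}\le \frac{1}{\beta}<\frac{Q+s}{Q},
\]
which is exactly the hypothesis of Theorem \ref{main thm 3}(2). Hence quasievery point is a genuine Lebesgue point of $u$, completing the proof.

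There is no real obstacle here; the only nontrivial content has already been packaged into Theorem \ref{main thm 3} and Lemma \ref{Orlicz lemma 1}. If anything, the mildly delicate point is the equivalence of \eqref{Phi 1} and \eqref{Phi 2}, but this is stated as a standard fact in the excerpt and follows from iterating \eqref{Phi 1}: given $C$ from \eqref{Phi 1}, one sets $\beta=\log_C 2$ and reads off \eqref{Phi 2} by iterating on dyadic scales in $t/s$. This short verification is the only auxiliary ingredient beyond the already-established results.
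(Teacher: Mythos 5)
Your proposal is correct and follows exactly the route the paper intends: the paper gives no separate proof of Theorem \ref{Orlicz 1} beyond the remark that it follows by combining Theorem \ref{main thm 3} with Lemma \ref{Orlicz lemma 1}, and your argument (absolute continuity from the doubling of $\Phi$, the equivalence of \eqref{Phi 1} and \eqref{Phi 2}, and the bound $\alpha_{L^\Phi}\le 1/\beta$) supplies precisely the details being elided. No issues.
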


\subsection{Variable exponent spaces} Let $p:\cP\to(0,\infty)$ be a measurable function. The space $L^{p(\cdot)}$ consisting of functions $u$ for which
\[
\int_\cP(|u(x)|/\lambda)^{p(x)}d\mu(x)<\infty
\]
for some $\lambda>0$ equipped with quasinorm
\[
\|u\|_{L^{p(\cdot)}}=\inf\{\lambda>0: \int_\cP(|u(x)|/\lambda)^{p(x)}d\mu(x)\le 1\}
\]
is a quasi-Banach function space.

A measurable function $p:\cP\to(0,\infty)$ is \emph{locally log-H\"older continuous} if
there exists a constant $C_p>0$ such that
\begin{equation}\label{log holder 1}
|p(x)-p(y)|\le C_{p}/\log(e+1/d(x,y))
\end{equation}
for all $x,y\in\cP$ 

Denote
\[
p_-=\essinf_{x\in\cP}p(x) \ \ \text{ and } \ \  p_+=\esssup_{x\in\cP}p(x)
\]

We need the following result from \cite{LP}.
\begin{lemma}\label{median max op bounded}
Let $p:\cP\to(0,\infty)$ be locally log-H\"older continuous with $p_->0$ and $p_+<\infty$. Suppose that
there exist constants $p_\infty>0$ and $0<a<1$ such that
\begin{equation}\label{decay}
\int_\cP a^{1/|p(x)-p_\infty|}d\mu(x)<\infty.
\end{equation}
Then there exists a constant $C$ such that
\[
\|\M^\gamma u\|_{L^{p(\cdot)}} \le C\gamma^{-1/p_-} \|u\|_{L^{p(\cdot)}} 
\]
for every $u\in L^{p(\cdot)}$ and $0<\gamma<1$.
Consequently, $\alpha_{L^{p(\cdot)}}\le 1/p_-$.
\end{lemma}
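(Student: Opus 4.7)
My plan is to reduce the estimate for $\M^\gamma$ on $L^{p(\cdot)}$ to the boundedness of the classical Hardy--Littlewood maximal operator on a rescaled variable exponent Lebesgue space.

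First I would establish the pointwise bound
\begin{equation}\label{pw-plan}
\M^\gamma u(x) \le \gamma^{-1/p_-}\bigl(\M |u|^{p_-}(x)\bigr)^{1/p_-}
\end{equation}
valid for every $x\in\cP$ and $0<\gamma<1$. Fix a ball $B$ and any $a$ with $0<a<m_{|u|}^\gamma(B)$. By the definition of the median, $\mu(\{y\in B:|u(y)|>a\})\ge\gamma\mu(B)$, while Chebyshev's inequality applied to $|u|^{p_-}$ yields $\mu(\{y\in B:|u(y)|>a\})\le a^{-p_-}\int_B|u|^{p_-}\,d\mu$. Comparing these two bounds and letting $a\uparrow m_{|u|}^\gamma(B)$ gives $m_{|u|}^\gamma(B)\le \gamma^{-1/p_-}\bigl(\vint{B}|u|^{p_-}\,d\mu\bigr)^{1/p_-}$; taking the supremum over balls centred at $x$ proves \eqref{pw-plan}.

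Next I would set $q(\cdot)=p(\cdot)/p_-$, so that $q_-=1$ and $q_+=p_+/p_-<\infty$. The rescaled exponent $q$ inherits the local log-H\"older condition \eqref{log holder 1} from $p$, and the decay condition \eqref{decay} is preserved (after adjusting the constants and $p_\infty$ accordingly). The deep input, which is the real content of \cite{LP}, is that under these hypotheses the classical Hardy--Littlewood maximal operator is bounded on $L^{q(\cdot)}(\cP)$. Granting this, apply the bound to $f=|u|^{p_-}$ and combine it with the homogeneity identity $\|h^{1/p_-}\|_{L^{p(\cdot)}}^{p_-}=\|h\|_{L^{q(\cdot)}}$ for $h\ge0$, which one checks directly from the Luxemburg definition by the substitution $\lambda=\tau^{p_-}$. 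Together with \eqref{pw-plan} this yields
\[
\|\M^\gamma u\|_{L^{p(\cdot)}}\le \gamma^{-1/p_-}\bigl\|\M |u|^{p_-}\bigr\|_{L^{q(\cdot)}}^{1/p_-}\le C^{1/p_-}\gamma^{-1/p_-}\|u\|_{L^{p(\cdot)}},
\]
which is the required estimate. The bound $\alpha_{L^{p(\cdot)}}\le 1/p_-$ then follows at once from the definition of the generalized upper Boyd index by taking logarithms and sending $\gamma\to 0$.

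The genuine obstacle in this plan is the boundedness of $\M$ on $L^{q(\cdot)}$: log-H\"older continuity is needed to compare $|f|^{q(\cdot)}$ with $|f|^{q(x)}$ on balls at small scales (the heart of Diening's argument in the Euclidean setting), and the decay condition \eqref{decay} is required to control the behaviour of $q$ at infinity on a non-compact doubling space. Once that ingredient is quoted from \cite{LP}, the remainder of the argument, based on the elementary median-to-average pointwise inequality \eqref{pw-plan}, is straightforward.
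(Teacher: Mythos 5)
There is a genuine gap at the key step. Your reduction sets $q(\cdot)=p(\cdot)/p_-$, so that $q_-=1$, and then invokes strong-type boundedness of the Hardy--Littlewood maximal operator on $L^{q(\cdot)}$. No such result is available: the Diening-type theorems for variable exponent spaces require the lower exponent to be \emph{strictly} greater than $1$, and the claim is already false in the simplest instance of your reduction, namely constant $p\equiv p_-$, where $q\equiv 1$ and you would be asserting that $\M$ is bounded on $L^1$. This is also not ``the real content of \cite{LP}'': Lerner and P\'erez prove the bound for the median maximal operator $\M^\gamma$ directly, essentially from the distributional inequality $\mu(\{\M^\gamma u>\lambda\})\le C\gamma^{-1}\mu(\{|u|>\lambda\})$ (Lemma \ref{Mgamma ie} of this paper) combined with a modular estimate in which log-H\"older continuity and the decay condition \eqref{decay} control the variability of the exponent; they do not pass through strong-type boundedness of $\M$. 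Note also that the paper itself gives no proof of this lemma --- it is quoted from \cite{LP} --- so the comparison here is with that reference.

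Your preliminary steps are fine: the pointwise inequality $\M^\gamma u\le\gamma^{-1/p_-}(\M|u|^{p_-})^{1/p_-}$ via Chebyshev, and the homogeneity identity $\|h^{1/p_-}\|_{L^{p(\cdot)}}^{p_-}=\|h\|_{L^{q(\cdot)}}$, are both correct. One can partially salvage the scheme by choosing $0<r<p_-$ instead of $r=p_-$: then $(p(\cdot)/r)_-=p_-/r>1$, the maximal operator is bounded on $L^{p(\cdot)/r}$, and you obtain $\|\M^\gamma u\|_{L^{p(\cdot)}}\le C_r\,\gamma^{-1/r}\|u\|_{L^{p(\cdot)}}$. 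Since this holds for every $r<p_-$, it still yields $\alpha_{L^{p(\cdot)}}\le 1/p_-$ and suffices for the applications in Theorem \ref{variable 1}, but it does not prove the inequality as stated, with the endpoint power $\gamma^{-1/p_-}$ and a constant independent of $\gamma$. To get the stated bound one must argue as in \cite{LP}, working with the weak-type estimate for $\M^\gamma$ rather than the strong-type estimate for $\M$.
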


The next lemma follows from \cite[Lemma 2.3]{AHH}, \cite[Corollary 3.5]{AHH} and from the fact that
the function $t\mapsto 1/t$ is bi-Lipschitz from $[a,b]$ to $[1/b,1/a]$ whenever $0<a<b<\infty$.

\begin{lemma}\label{ext lemma}
Suppose that $p:\cP\to(0,\infty)$ is locally log-H\"older continuous and that $p_->0$ and $p_+<\infty$. 
Then, for any ball $B\subset\cP$, there exists a locally log-H\"older continuous extension $\tilde p$ of $p_{|B}$
such that $\tilde p_-=p_-$, $\tilde p_+=p_+$ and 
\begin{equation}\label{decay for tilde p}
\int_\cP a^{1/|\tilde p(x)-\tilde p_\infty|}d\mu(x)<\infty.
\end{equation}
for some $\tilde p_\infty>0$ and $0<a<1$.
\end{lemma}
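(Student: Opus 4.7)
The plan is to deduce the lemma from the cited results of \cite{AHH} by passing to the reciprocal exponent. Since $p$ takes values in the compact interval $[p_-,p_+]\subset(0,\infty)$ and the map $t\mapsto 1/t$ is bi-Lipschitz from $[p_-,p_+]$ onto $[1/p_+,1/p_-]$, the function $q:=1/p$ is locally log-H\"older continuous on $\cP$, with $q_-=1/p_+$ and $q_+=1/p_-$, and with log-H\"older constant controlled by $C_p$ together with the bi-Lipschitz constants. This reformulation is the only thing needed to bring the extension problem into the exact form covered by \cite{AHH}.

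First I would apply \cite[Lemma 2.3]{AHH} to the restriction $q_{|B}$ in order to obtain a locally log-H\"older continuous extension $\tilde q\colon\cP\to(0,\infty)$ of $q_{|B}$ with $\tilde q_-=q_-$ and $\tilde q_+=q_+$. Next I would invoke \cite[Corollary 3.5]{AHH} applied to $\tilde q$ to produce $\tilde q_\infty>0$ and $0<a<1$ such that
\[
\int_\cP a^{1/|\tilde q(x)-\tilde q_\infty|}\,d\mu(x)<\infty.
\]

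Finally I would define $\tilde p:=1/\tilde q$ and $\tilde p_\infty:=1/\tilde q_\infty$ and verify the conclusions. The equality $\tilde p=p$ on $B$ is automatic, and the bi-Lipschitz property of $t\mapsto 1/t$ on $[1/p_+,1/p_-]$ applied a second time gives the local log-H\"older continuity of $\tilde p$ and the identities $\tilde p_-=1/\tilde q_+=p_-$, $\tilde p_+=1/\tilde q_-=p_+$. For the decay at infinity, since both $\tilde q$ and $\tilde p$ range in fixed compact subintervals of $(0,\infty)$, there is a constant $c\ge 1$ with $|\tilde p(x)-\tilde p_\infty|\le c\,|\tilde q(x)-\tilde q_\infty|$ for every $x\in\cP$; consequently, setting $\tilde a:=a^{c}\in(0,1)$,
\[
\tilde a^{1/|\tilde p(x)-\tilde p_\infty|}\le a^{1/|\tilde q(x)-\tilde q_\infty|},
\]
and the required integrability condition for $\tilde p$ follows by integration.

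The main (indeed only) obstacle is the bookkeeping: one must keep track of how the log-H\"older constant, the essential bounds, and the base $a$ of the exponential decay transform under the map $p\leftrightarrow 1/p$. Once the bi-Lipschitz dictionary between $[p_-,p_+]$ and $[1/p_+,1/p_-]$ is recorded explicitly, everything else is a direct application of \cite[Lemma 2.3]{AHH} and \cite[Corollary 3.5]{AHH}.
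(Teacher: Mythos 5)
Your proposal is correct and follows exactly the route the paper indicates (the paper gives no written proof beyond citing \cite[Lemma 2.3]{AHH}, \cite[Corollary 3.5]{AHH} and the bi-Lipschitz property of $t\mapsto 1/t$ on compact subintervals of $(0,\infty)$). Your transfer of the decay condition via $\tilde a=a^{c}$ is the right bookkeeping and checks out.
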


\begin{theorem}\label{variable 1}
Suppose that $\mu$ satisfies \eqref{Q} and that  $p:\cP\to(0,\infty)$ is locally log-H\"older continuous with $p_+<\infty$.  Let $0<s\le 1$ and let
$u\in\dot M^{s,p(\cdot)}:=M^{s,L^{p(\cdot)}}$ be quasicontinuous.
\begin{itemize}
    \item[(1)] If $p_->0$, then quasievery point is a generalized Lebesgue point of $u$.
    \item[(2)] If $p_->Q/(Q+s)$, then quasievery point is a Lebesgue point of $u$.
\end{itemize}
\end{theorem}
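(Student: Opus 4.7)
The plan is to derive Theorem \ref{variable 1} from Theorems \ref{main thm} and \ref{main thm 2} applied to $X = L^{p(\cdot)}$. Since we do not assume the global decay condition \eqref{decay}, Lemma \ref{median max op bounded} does not apply to $p$ directly, and a priori $\alpha_{L^{p(\cdot)}}$ need not be finite; consequently Theorem \ref{main thm 3} cannot be used as a black box. The key observation is that Theorems \ref{main thm} and \ref{main thm 2} only require the \emph{local} boundedness estimates \eqref{M gamma bd} and \eqref{M q-bd}, one ball at a time, and these can be verified by pairing Lemma \ref{ext lemma} with Lemma \ref{median max op bounded} (for (1)) or with Lemma \ref{index} (for (2)).

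Absolute continuity of $\|\cdot\|_{L^{p(\cdot)}}$ is standard since $p_+<\infty$ makes norm convergence equivalent to modular convergence. To verify \eqref{M gamma bd} for a given ball $B = B(x_0, r_0)$, I would enlarge to $B' = B(x_0, r_0 + 2)$ and apply Lemma \ref{ext lemma} to $B'$ to obtain a locally log-H\"older continuous extension $\tilde p$ of $p_{|B'}$ with $\tilde p_- = p_-$, $\tilde p_+ = p_+$ and satisfying \eqref{decay for tilde p}. For $x \in B$ the ball $B(x,1)$ is contained in $B'$, so $\M^\gamma_1 g(x) = \M^\gamma_1(g\chi_{B'})(x)$; and since $p = \tilde p$ on $B'$, the Luxembourg norms of $p(\cdot)$ and $\tilde p(\cdot)$ agree on functions supported in $B'$. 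Chaining these observations,
$$\|(\M^\gamma_1 g)\chi_B\|_{L^{p(\cdot)}} \le \|\M^\gamma(g\chi_{B'})\|_{L^{\tilde p(\cdot)}} \le C\gamma^{-1/p_-}\|g\chi_{B'}\|_{L^{\tilde p(\cdot)}} \le C\gamma^{-1/p_-}\|g\|_{L^{p(\cdot)}},$$
where the middle step is Lemma \ref{median max op bounded} applied to $\tilde p$. This verifies \eqref{M gamma bd}, and Theorem \ref{main thm} delivers part (1).

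For part (2) set $q = Q/(Q+s)$. The hypothesis $p_- > q$ gives $\alpha_{L^{\tilde p(\cdot)}} \le 1/p_- < 1/q$ by Lemma \ref{median max op bounded}, so Lemma \ref{index} yields boundedness of $f \mapsto (\M|f|^q)^{1/q}$ on $L^{\tilde p(\cdot)}$. Localizing exactly as in (1) produces \eqref{M q-bd} for $L^{p(\cdot)}$, and Theorem \ref{main thm 2} concludes. The main obstacle is the bookkeeping in the localization: one must carefully track which exponent is in force on which set, verify that $\M^\gamma_1 g$ restricted to $B$ genuinely depends only on $g$ restricted to the enlargement $B'$, and confirm that the identification $\|\cdot\|_{L^{p(\cdot)}} = \|\cdot\|_{L^{\tilde p(\cdot)}}$ on $B'$-supported functions is legitimate, so that the swap $p \leftrightarrow \tilde p$ is harmless.
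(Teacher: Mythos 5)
Your proposal is correct and follows essentially the same route as the paper: both reduce to the local hypotheses \eqref{M gamma bd} and \eqref{M q-bd} of Theorems \ref{main thm} and \ref{main thm 2}, verified by extending $p$ from an enlarged ball via Lemma \ref{ext lemma} and then invoking Lemma \ref{median max op bounded} (resp.\ Lemma \ref{index}) for the extended exponent $\tilde p$. The only differences are cosmetic (the paper enlarges by radius $1$ rather than $2$, and writes $\tilde p_-$ where you write $p_-$, which coincide by Lemma \ref{ext lemma}).
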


\begin{proof}
(1) Since $p_+<\infty$, $L^{p(\cdot)}$ has absolutely continuous quasinorm. 
By Theorem \ref{main thm}, it suffices to show that, for every ball $B=B(x,r)$ and $0<\gamma<1$, there exists a constant $C$ such that $$\|(\M^\gamma_1 g)\chi_B\|_{L^{p(\cdot)}}\le C\|g\|_{L^{p(\cdot)}}$$ for every $g\in L^{p(\cdot)}$.
By Lemma \ref{ext lemma}, $p_{|B(x,r+1)}$ can be extended to $\tilde p$ on $\cP$ such that $\tilde p_+<\infty$, $\tilde p_->0$ and 
\eqref{decay for tilde p} holds true
for some $0<a<1$ and $\tilde p_\infty>0$.
By Lemma \ref{median max op bounded},
there exists a constant $C$ such that
\[
\|\M^\gamma v\|_{L^{\tilde p(\cdot)}} \le C\gamma^{-1/\tilde p_-} \|v\|_{L^{\tilde p(\cdot)}} 
\]
for every $v\in L^{\tilde p(\cdot)}$.
 Thus,
\[
\begin{split}
\|(\M^\gamma_1 g)\chi_B\|_{L^{p(\cdot)}}&\le\|\M^\gamma(g\chi_{B(x,r+1)})\|_{L^{\tilde p(\cdot)}}\\
&\le C\gamma^{-1/\tilde p_-}\|g\chi_{B(x,r+1)}\|_{L^{\tilde p(\cdot)}}\\
&\le C\gamma^{-1/\tilde p_-}\|g\|_{L^{p(\cdot)}}
\end{split}
\]
for every $g\in L^{p(\cdot)}$.

(2) Denote $q=Q/(Q+s)$. By Theorem \ref{main thm 2}, it suffices to show that, for every ball $B=B(x,r)$, there exists a constant $C$ such that 
\[
\|(\M_1 g^q)^{1/q}\chi_B\|_{L^{p(\cdot)}}\le C\|g\|_{L^{p(\cdot)}}
\]
for every $0\le g\in L^{p(\cdot)}.$
By Lemma \ref{ext lemma}, $p_{|B(x,r+1)}$ can be extended to $\tilde p$ on $\cP$ such that $\tilde p_+<\infty$, $\tilde p_->q$ and 
\eqref{decay for tilde p} holds true
for some $0<a<1$ and $\tilde p_\infty>0$.
Hence, by Lemma \ref{median max op bounded}, $\alpha_{L^{\tilde p(\cdot)}}\le 1/\tilde p_-<1/q$ and so, by Lemma \ref{index}, operator $g\mapsto(\M |g|^q)^{1/q}$ is bounded on $L^{\tilde p(\cdot)}$.
Thus,
\[
\begin{split}
\|(\M_1 g^q)^{1/q}\chi_B\|_{L^{p(\cdot)}}&\le\|(\M(g\chi_{B(x,r+1)})^q)^{1/q}\|_{L^{\tilde p(\cdot)}}\le C\|g\chi_{B(x,r+1)}\|_{L^{\tilde p(\cdot)}}\le C\|g\|_{L^{p(\cdot)}}
\end{split}
\]
for every $0\le g\in L^{p(\cdot)}$.
\end{proof}



\vspace{0.5cm}
\noindent
\small{\textsc{T.H.},}
\small{\textsc{Department of Mathematics},}
\small{\textsc{P.O. Box 11100},}
\small{\textsc{FI-00076 Aalto University},}
\small{\textsc{Finland}}\\
\footnotesize{\texttt{toni.heikkinen@aalto.fi}}



\end{document}